


\documentclass[12pt]{article}
\usepackage{a4wide}
\usepackage{amsmath,amsfonts,amssymb}
\usepackage{amsthm}
\usepackage{graphicx}

\usepackage{xcolor}

\usepackage{hyperref}
\usepackage{breakurl}

\definecolor{modra3}{rgb}{.1,.0,.4}
\hypersetup{colorlinks=true, linkcolor=modra3, urlcolor=modra3, citecolor=modra3, pdfpagemode=UseNone, pdfstartview=} 

\usepackage{orcidlink}

\newcommand{\lowerenv}[1]{\ensuremath{\text{low}(#1)}}
\makeatletter
\newcommand{\overleftrightsmallarrow}{\mathpalette{\overarrowsmall@\leftrightarrowfill@}}
\newcommand{\overrightsmallarrow}{\mathpalette{\overarrowsmall@\rightarrowfill@}}
\newcommand{\overleftsmallarrow}{\mathpalette{\overarrowsmall@\leftarrowfill@}}
\newcommand{\overarrowsmall@}[3]{%
  \vbox{%
    \ialign{%
      ##\crcr
      #1{\smaller@style{#2}}\crcr
      \noalign{\nointerlineskip}%
      $\m@th\hfil#2#3\hfil$\crcr
    }%
  }%
}
\def\smaller@style#1{%
  \ifx#1\displaystyle\scriptstyle\else
    \ifx#1\textstyle\scriptstyle\else
      \scriptscriptstyle
    \fi
  \fi
}
\makeatother



\newtheorem{theorem}{Theorem}        
\newtheorem{prop}[theorem]{Proposition}            
\newtheorem{lem}[theorem]{Lemma}
\newtheorem{cor}[theorem]{Corollary}

\theoremstyle{remark}
\newtheorem*{rem_}{Remark}

\begin{document}

\title{Extending simple monotone drawings\footnote{A preliminary version of this paper appeared in the proceedings of the 36th International Workshop on Combinatorial Algorithms (IWOCA 2025), \href{https://doi.org/10.1007/978-3-031-98740-3_2}{doi: 10.1007/978-3-031-98740-3\_2}.}
} 

\author{
Jan Kyn\v{c}l\footnote{Department of Applied Mathematics, Charles University, Faculty of Mathematics and Physics, Malostransk\'e n\'am.~25, 118 00~ Praha 1, Czech Republic; \texttt{\{kyncl,soukup\}@kam.mff.cuni.cz}. Supported by project 23-04949X of the Czech Science Foundation (GA\v{C}R) and by the grant SVV–2023–260699.  
}\ \,\orcidlink{0000-0003-4908-4703} 
\and 
Jan Soukup\footnotemark[2]\ \,\orcidlink{0000-0002-5039-3872}
} 

\date{}
\maketitle

\begin{abstract}
We prove the following variant of Levi's Enlargement Lemma: for an arbitrary arrangement $\mathcal{A}$ of $x$-monotone pseudosegments in the plane and a pair of points $a,b$ with distinct $x$-coordinates and not on the same pseudosegment, there exists a simple $x$-monotone curve with endpoints $a,b$ that intersects every curve of $\mathcal{A}$ at most once. As a consequence, every simple monotone drawing of a graph can be extended to a simple monotone drawing of a complete graph. We also show that extending an arrangement of cylindrically monotone pseudosegments is not always possible; in fact, the corresponding decision problem is NP-hard.
\end{abstract}

\date{}


\section{Introduction}
\label{section_intro}

Given $k\ge 1$, a finite set $\mathcal{A}$ of simple curves in the plane is called an \emph{arrangement of \mbox{$k$-strings}} if every pair of the curves of $\mathcal{A}$ intersects at most $k$ times, and every intersection point is either a proper crossing or an endpoint of at least one of the curves of the intersecting pair. Multiple $k$-strings can intersect at a~common point, an endpoint of one curve can lie on another, but two curves cannot touch in their inner points. An arrangement of $1$-strings is also called an \emph{arrangement of pseudosegments}, and each curve in the arrangement is called a \emph{pseudosegment}. In this paper, we represent simple curves as subsets of the plane (or another surface) that are homeomorphic images of a closed interval. When necessary, we also specify one endpoint of a curve as the starting point to introduce an orientation to the curve.

A simple curve $\gamma$ in the plane is \emph{$x$-monotone}, shortly \emph{monotone}, if $\gamma$ intersects every line parallel to the $y$-axis at most once.

Let $\mathcal{F}$ be a family of arrangements of pseudosegments in the plane, and let $a,b$ be a pair of points in the plane. An arrangement $\mathcal{A}$ from $\mathcal{F}$ is \emph{$(a,b)$-extendable in $\mathcal{F}$} if there exists a simple curve $\alpha$ with endpoints $a,b$ so that $\mathcal{A} \cup \{\alpha\} \in \mathcal{F}$. The arrangement $\mathcal{A}$ is \emph{extendable in $\mathcal{F}$} if it is $(a,b)$-extendable in $\mathcal{F}$ for all possible choices of $a$ and $b$ with distinct $x$-coordinates and not on the same pseudosegment. We omit the family $\mathcal{F}$ from the notation whenever it is clear from context.
The family $\mathcal{F}$ is \emph{extendable} if all its elements are extendable in $\mathcal{F}$.

Our main result is the following.

\begin{theorem}
\label{theorem_main}
The family of all arrangements of monotone pseudosegments in the plane is extendable.
\end{theorem}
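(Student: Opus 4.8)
\section*{Proof plan for Theorem~\ref{theorem_main}}

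The plan is to reduce to a bounded vertical strip and then argue by induction on the number of pseudosegments, using a local ``lens removal'' to destroy multiple intersections. Assume $x(a)<x(b)$. Since any $x$-monotone curve from $a$ to $b$ is confined to the strip $S=[x(a),x(b)]\times\mathbb R$, and since the $x$-projection of a pseudosegment is an interval, the restriction to $S$ of each member of $\mathcal A$ is a single $x$-monotone arc; so it suffices to find a monotone curve from $a$ to $b$ inside $S$ meeting each of these arcs at most once. By a routine perturbation I may also assume general position: all intersections are transversal, no two of the finitely many intersection points or pseudosegment endpoints have the same $x$-coordinate, and $a,b$ lie on no pseudosegment (the remaining case follows by moving $a,b$ slightly or by a limiting argument).

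Now induct on $n=|\mathcal A|$; the case $n=0$ is the segment $ab$. For the step, delete a pseudosegment $s_0$ and use the inductive hypothesis to obtain a monotone curve $\gamma$ from $a$ to $b$ meeting every $s\in\mathcal A\setminus\{s_0\}$ at most once. If $|\gamma\cap s_0|\le 1$ we are done; otherwise take, among all such curves, one minimising a suitable quantity (for instance $|\gamma\cap s_0|$, or the total number of intersections). If $\gamma$ meets $s_0$ in two $x$-consecutive points $p,q$ with $x(p)<x(q)$, then the sub-arcs $\alpha\subseteq\gamma$ and $\beta\subseteq s_0$ over $[x(p),x(q)]$ meet only at $p,q$ and bound an $x$-monotone lens $L$; replacing $\alpha$ by a monotone arc that hugs $\beta$ closely from the far side of $s_0$ yields a monotone curve $\gamma'$ with two fewer intersections with $s_0$. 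The point is to carry this out so that no new double intersection is created: if $s\neq s_0$ crosses $s_0$ inside the $x$-range of $L$, then since $s$ meets $s_0$ only once it can leave $L$ only through $\alpha$, so $|\alpha\cap s|\ge 1$ and the swap costs nothing; the delicate case is when $s$ instead has an endpoint in the interior of $L$.

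That endpoint case is exactly where an $x$-monotone pseudosegment arrangement behaves unlike a pseudoline arrangement, and I expect it to be the crux of the whole argument; handling it should use $x$-monotonicity in an essential way --- either by choosing $s_0$ and the lens so that no troublesome endpoint lies inside, exploiting the linear order of endpoints and crossings along the $x$-axis, or by a more global rerouting/charging argument that controls how the fixes propagate from left to right. The same difficulty reappears in a second, equivalent formulation that may be cleaner to run: sweep a vertical line from $x(a)$ to $x(b)$, maintaining the current point of $\gamma$ together with the set of pseudosegments it has already crossed; the only way to fail is to become trapped in a cell that is pinched off between two already-crossed pseudosegments that are about to cross each other, and the task is to show that with the right steering rule this never happens. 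By contrast, on the cylinder there is no left-to-right order to exploit and a monotone curve can wind around, which is why no such argument is available there and the analogous extension problem turns out to be NP-hard.
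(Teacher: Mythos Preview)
Your plan correctly isolates the obstacle but stops short of resolving it, and the gap is real, not a technicality. In the swap you describe, a pseudosegment $s\neq s_0$ that crosses $s_0$ once inside the lens and then terminates inside $L$ never reaches $\alpha$; after you reroute $\gamma$ along $\beta$, such an $s$ acquires a crossing with $\gamma'$ without losing one, so if it already met $\gamma$ once outside the lens you now have $|\gamma'\cap s|=2$. Neither of your minimality choices survives this: with $|\gamma\cap s_0|$ as the potential, $\gamma'$ drops out of the class of curves meeting every other pseudosegment at most once, so no contradiction results; with the total crossing number as the potential, three or more such $s$ inside $L$ outweigh the $-2$ coming from $s_0$, and such configurations are easy to build. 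The fixes you float---choose $s_0$ so that a clean lens exists, or sweep left to right with some steering rule---are not arguments: you give no rule, no existence proof for a clean lens, and no proof that trapping is avoided.

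The paper proceeds differently and sidesteps the endpoint difficulty altogether. Rather than induct on $|\mathcal A|$, it starts from a concrete curve running from $a$ to $b$ \emph{above} every pseudosegment in the strip, and then repeatedly replaces the current curve by the lower envelope of itself with any pseudosegment that still crosses it at least twice. The key is a one-sided invariant: no pseudosegment ever forms a \emph{top} bigon with the current curve (together with a ``no touching from below'' invariant this forces every double crossing to be a single bottom bigon). The invariant persists because a hypothetical new top bigon would either have both endpoints on the freshly added arc---two crossings with a single pseudosegment, impossible---or would already yield a top bigon with the previous curve. Since the curve only ever moves downward, a pseudosegment whose endpoint lies in the region just swept over can create at most a new bottom bigon, never a top one, so it is harmless. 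The process halts in at most $|\mathcal A|$ steps, and a slight downward shift of the final lower envelope is the desired extending curve. The idea your plan is missing is precisely this asymmetry: reroute only downward from an initial curve lying above everything, and maintain the ``no top bigon'' invariant rather than attempt a symmetric lens removal.
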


Moreover, in Subsection~\ref{subsection_algorithm} we show that the proof of Theorem~\ref{theorem_main} can be turned into an efficient algorithm: given a suitable representation of an arrangement of $n$ monotone pseudosegments with $m$ incidences between endpoints or intersection points and pseudosegments, we can find the new pseudosegment extending the arrangement in time $O(m)$. 

A drawing of a graph in the plane is \emph{simple} if every pair of edges has at most one common point, either a common endpoint or a proper crossing. A drawing of a graph is \emph{monotone} if every edge is drawn as a monotone curve and no two vertices share the same $x$-coordinate. We have the following direct consequence of Theorem~\ref{theorem_main}, illustrated in Figure~\ref{fig_00_cor_2}.

\begin{cor}
\label{cor_drawings}
Every simple monotone drawing of a graph in the plane can be extended to a simple monotone drawing of the complete graph with the same set of vertices.
\end{cor}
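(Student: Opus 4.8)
The plan is to add the missing edges of $K_n$ one at a time, invoking Theorem~\ref{theorem_main} at each step, and to argue by induction on the number of missing edges. Let $D$ be a simple monotone drawing of a graph $G$ on a vertex set $V$ with $|V|=n$. Because $D$ is simple, every two edges share at most one point, a common endpoint or a proper crossing, so the set $\mathcal{A}_D$ of edges of $D$, regarded as subsets of the plane, is an arrangement of monotone pseudosegments. If $G=K_n$ we are done; otherwise fix a non-edge $uv$ of $G$.

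I would then apply Theorem~\ref{theorem_main} to $\mathcal{A}_D$ with the pair of points $a=u$, $b=v$. This is legitimate: $u$ and $v$ have distinct $x$-coordinates since $D$ is monotone, and they do not lie on a common pseudosegment of $\mathcal{A}_D$, because an edge of $D$ contains a vertex only if that vertex is one of its endpoints and no edge of $D$ joins $u$ to $v$. The theorem produces a monotone curve $\gamma$ from $u$ to $v$ that meets every edge of $D$ in at most one point. Using $\gamma$ as the drawing of the edge $uv$ already yields a monotone drawing of $G+uv$ in which $\gamma$ shares at most one point with each old edge; in particular, for an edge incident to $u$ or to $v$ that single point must be the common endpoint, as simplicity demands.

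Two further conditions must be arranged so that $D\cup\gamma$ is a \emph{simple} monotone drawing of $G+uv$: $\gamma$ should cross each old edge properly rather than merely touch it, and $\gamma$ must avoid every vertex in $V\setminus\{u,v\}$. Both are handled by local modifications of $\gamma$. A tangential contact with an edge $e$ in the interior of both curves is removed by nudging $\gamma$ off $e$ inside a tiny disk around the contact point. If $\gamma$ passes through a vertex $w\in V\setminus\{u,v\}$, I would take a disk $D_w$ around $w$ so small that it meets $\mathcal{A}_D$ only in the edges incident to $w$, each in a single sub-arc, and reroute $\gamma$ around $w$ inside $D_w$: since $\gamma$ meets each edge incident to $w$ only at $w$, every such edge lies — apart from $w$ — entirely on one side of $\gamma$ near $w$, and the monotonicity of the edges should let one slide $\gamma$ slightly past $w$ so as to create at most one new crossing with each edge incident to $w$ and no new intersection with any other edge. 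I expect this rerouting to be the main obstacle: guaranteeing that the detour around $w$ does not produce a \emph{second} crossing with some incident edge is the delicate point, and the argument must exploit that all the curves involved are graphs of functions of $x$ near $w$. An alternative is to build vertex-avoidance into $\gamma$ from the start, running the algorithm underlying Theorem~\ref{theorem_main} from an initial curve disjoint from $V\setminus\{u,v\}$ and keeping the curve off these vertices whenever it is rerouted along an edge of $D$.

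After this fix-up, $D\cup\gamma$ is a simple monotone drawing of $G+uv$ on $V$ with one fewer missing edge than $D$, and its edge set is again an arrangement of monotone pseudosegments, so the step can be repeated. After $\binom{n}{2}-|E(G)|$ iterations we obtain a simple monotone drawing of $K_n$ on $V$ that extends $D$.
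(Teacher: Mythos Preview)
Your approach---iterating Theorem~\ref{theorem_main} over the missing edges---is exactly what the paper intends; the corollary is stated there as a ``direct consequence'' of Theorem~\ref{theorem_main} with no separate argument, so you are just filling in details the paper leaves implicit.

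The concerns you raise about tangencies and about $\gamma$ passing through a vertex $w\in V\setminus\{u,v\}$ are legitimate, but lighter than you fear. In the proof of Theorem~\ref{theorem_main} the extending curve is obtained by drawing it ``slightly below'' a certain lower envelope; this yields a whole family of admissible curves, from which one can pick a member that avoids the finitely many points of $V\setminus\{u,v\}$ and that meets every edge transversally (invariant~(I2) already excludes touchings of the envelope from below, so pushing slightly below turns every remaining intersection into a proper crossing). Hence your second alternative---building vertex-avoidance into the construction from the start---is the clean fix, and the post-hoc rerouting around $w$, which you rightly flag as the delicate step, is unnecessary.
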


\begin{figure}
\begin{center}		
\includegraphics{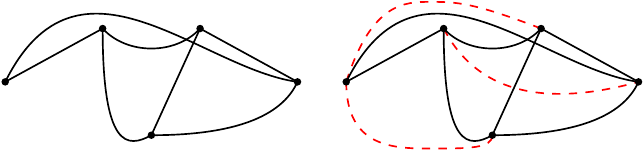}
\caption{Left: a simple monotone drawing of a graph. 
Right: an extension of the drawing on the left to a simple monotone drawing of a complete graph. The added edges are dashed.}
\label{fig_00_cor_2}
\end{center}		
\end{figure}

\subsection{Drawings on a cylinder}

A cylinder can be represented as the surface $S^1 \times \mathbb{R}$ embedded in $\mathbb{R}^3$, where $S^1$ is the unit circle in the $xy$-plane. A simple curve $\gamma$ on the cylinder is \emph{cylindrically monotone} if $\gamma$ intersects every line parallel to the $z$-axis at most once. A finite system of circular arcs on the circle $S^1$ is \emph{normal} if no pair of arcs covers the whole circle~\cite{LS09_circular}.
An arrangement of cylindrically monotone pseudosegments is \emph{normal} if the projections of its curves to $S^1$ form a normal system of circular arcs. Drawings of graphs whose edges are cylindrically monotone have also been called \emph{angularly monotone}~\cite{FR13_disjoint}, and in the case when the edges form a normal arrangement they have also been called \emph{strongly c-monotone}~\cite{AOV23_hamiltonian}.

Let $\mathcal{F}$ be a family of arrangements of pseudosegments on the cylinder, and let $a,b$ be a pair of points on the cylinder. An arrangement $\mathcal{A}$ from $\mathcal{F}$ is \emph{$(a,b)$-extendable in~$\mathcal{F}$} if there exists a simple curve $\alpha$ with endpoints $a,b$ so that $\mathcal{A} \cup \{\alpha\} \in \mathcal{F}$. The arrangement $\mathcal{A}$ is \emph{extendable in $\mathcal{F}$} if it is $(a,b)$-extendable in~$\mathcal{F}$ for all possible choices of $a$ and $b$ not on the same vertical line and not on the same pseudosegment. We omit the family $\mathcal{F}$ from the notation whenever it is clear from context.
The family $\mathcal{F}$ is \emph{extendable} if all its elements are extendable in $\mathcal{F}$.

Theorem~\ref{theorem_main} generalizes in a straightforward way to normal arrangements of cylindrically monotone pseudosegments:

\begin{cor}
\label{cor_cylinder}
The family of all normal arrangements of cylindrically monotone pseudosegments is extendable. 

\end{cor}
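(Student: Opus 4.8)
The plan is to deduce Corollary~\ref{cor_cylinder} from Theorem~\ref{theorem_main} by cutting the cylinder open into a planar strip, choosing the cut so that no pseudosegment is broken into two pieces. Write $\pi\colon S^1\times\mathbb{R}\to S^1$ for the projection, fix points $a,b$ with $\alpha:=\pi(a)\neq\pi(b)=:\beta$ and not on a common pseudosegment, and let $\overline{I_1},\overline{I_2}$ be the two closed arcs of $S^1$ with endpoints $\alpha,\beta$, so that $\overline{I_1}\cup\overline{I_2}=S^1$ and $\overline{I_1}\cap\overline{I_2}=\{\alpha,\beta\}$. For $s\in\mathcal{A}$ put $J_s:=\pi(s)$; monotonicity makes $\pi$ restrict to a continuous injection, hence (by compactness) a homeomorphism, from $s$ onto $J_s$, and $J_s$ is a \emph{proper} closed sub-arc of $S^1$, since a continuous injection cannot map a closed interval onto $S^1$. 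For $i\in\{1,2\}$ let $R_i:=\pi^{-1}(\overline{I_i})$, a closed ``lune''; because $\overline{I_i}$ is an arc, $R_i$ is homeomorphic to a closed vertical strip in $\mathbb{R}^2$ via a homeomorphism that keeps the vertical coordinate, unrolls $\overline{I_i}$ to a horizontal interval, and keeps $\alpha,\beta$ on the two boundary lines; under it, cylindrically monotone curves lying in $R_i$ correspond exactly to $x$-monotone curves.

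The one subtle point, and the only place normality is used, is to choose $i$ so that every $s\cap R_i$ is connected --- hence a single monotone pseudosegment (or empty, or a single point, which degenerate cases we discard). Since $s\cap R_i$ is homeomorphic to $J_s\cap\overline{I_i}$, and two proper arcs of $S^1$ meet in two components exactly when their union is all of $S^1$, the set $s\cap R_i$ is disconnected precisely when $J_s\supseteq\overline{I_{3-i}}$. So $R_1$ works unless some $s_1\in\mathcal{A}$ has $J_{s_1}\supseteq\overline{I_2}$, and $R_2$ works unless some $s_2\in\mathcal{A}$ has $J_{s_2}\supseteq\overline{I_1}$. Were both to fail, we would get $J_{s_1}\cup J_{s_2}\supseteq\overline{I_1}\cup\overline{I_2}=S^1$; if $s_1=s_2$ this forces $J_{s_1}=S^1$, contradicting properness, and if $s_1\neq s_2$ it contradicts normality. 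Hence at least one lune works. I expect this to be where the real content sits: it rules out a pseudosegment winding more than halfway around the cylinder being cut through by \emph{every} admissible vertical line, which is precisely the phenomenon that makes non-normal arrangements fail to be extendable.

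It remains to finish inside a working lune, say $R_1$, transported to a strip in $\mathbb{R}^2$. The curves $s\cap R_1$ ($s\in\mathcal{A}$, discarding the degenerate restrictions) form an arrangement of monotone pseudosegments in the plane --- pairwise meeting at most once, inherited from $\mathcal{A}$ --- and $a,b$ lie on the two boundary lines of the strip, have distinct $x$-coordinates, and lie on no common pseudosegment of it. By Theorem~\ref{theorem_main} there is an $x$-monotone curve $\gamma$ from $a$ to $b$ meeting each $s\cap R_1$ at most once. Being $x$-monotone with its endpoints on the two boundary lines, $\gamma$ stays inside the strip, hence inside $R_1$; rolled back onto the cylinder it is cylindrically monotone, and $\gamma\cap s=\gamma\cap(s\cap R_1)$ for every $s\in\mathcal{A}$, so $\gamma$ meets each pseudosegment at most once. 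As $a,b$ were arbitrary, $\mathcal{A}$ is extendable. (The discarded restrictions are harmless: a single-point restriction lies on a boundary fiber at a point $\neq a,b$, and $\gamma$ meets that fiber only in $a$ or $b$.)
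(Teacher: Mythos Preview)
Your proof is correct and, interestingly, is exactly the direct approach the paper alludes to but does not carry out: ``We could do it directly by `cutting' the cylinder along the vertical lines passing through $a$ and $b$ and showing that one of the resulting parts corresponds to the planar version.'' The paper instead routes through Proposition~\ref{prop_normal}: it first applies a homeomorphism of $S^1$ making every projected arc shorter than $\pi$, then observes that the shorter of the two arcs $\rightinterval{\pi(g(a))}{\pi(g(b))}$, $\leftinterval{\pi(g(a))}{\pi(g(b))}$ automatically gives a strip in which every pseudosegment restricts to a connected piece.

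Your argument is more self-contained, since it bypasses Proposition~\ref{prop_normal} entirely: the dichotomy ``if $R_1$ fails then some $J_{s_1}\supseteq\overline{I_2}$, if $R_2$ fails then some $J_{s_2}\supseteq\overline{I_1}$, hence $J_{s_1}\cup J_{s_2}=S^1$'' uses normality exactly once and in the most transparent way. The paper's route, on the other hand, buys a small extra conclusion you do not obtain: because the chosen strip has length less than $\pi$, the new pseudosegment itself projects to an arc of length less than $\pi$, so the \emph{extended} arrangement is again normal. Your curve $\gamma$ lies in one of the lunes $R_i$, but that lune may be the longer one, so $\mathcal{A}\cup\{\gamma\}$ need not be normal. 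This does not affect the corollary as stated, which only asks for $(a,b)$-extendability.
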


It may be helpful to consider an alternative definition of normal arrangements as those where the curves are drawn the ``shorter way'' around the cylinder. The following result implies that these two definitions are combinatorially equivalent.

\begin{prop}
\label{prop_normal}
Given a normal system $\mathcal{C}$ of circular arcs on the circle $S^1$, there exists a homeomorphism of $S^1$ that maps each arc in $\mathcal{C}$ to an arc of length smaller than $\pi$.
\end{prop}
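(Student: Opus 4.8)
The plan is to reduce the proposition to a finite optimization problem and then to solve that problem by the minimax theorem. Let $P$ be the set of all endpoints of the arcs in $\mathcal{C}$; its points cut $S^1$ into finitely many open \emph{elementary arcs} $e_1,\dots,e_m$. Any assignment of positive lengths to the $e_j$ summing to the circumference of $S^1$ is realized by some homeomorphism of $S^1$ (interpolate on each elementary arc), so, normalizing the circumference to $1$, it suffices to find weights $w_1,\dots,w_m>0$ with $\sum_j w_j=1$ and $\sum_{e_j\subseteq A}w_j<\tfrac12$ for every $A\in\mathcal{C}$. One-point arcs impose no constraint and can be discarded; by normality no arc equals $S^1$; so we may assume every arc is a proper non-degenerate arc. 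Writing $F=S^1\setminus A$ for the complementary open arc and $w(X)$ for the total weight of the elementary arcs inside $X$, the requirement becomes $w(F_i)>\tfrac12$ for every complement $F_i$ --- and normality says exactly that the complements $F_1,\dots,F_n$ are pairwise intersecting.

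Now let $v=\max_w\min_i w(F_i)$, the maximum over probability weightings of the least weight of a complement. By the minimax theorem, $v=\min_q\max_j q(\{i:e_j\subseteq F_i\})$, where $q$ ranges over probability distributions on the arcs. It is enough to show $v>\tfrac12$: the maximizer $w^{*}$ then has $\min_i w^{*}(F_i)>\tfrac12$, and replacing $w^{*}$ by a small mixture with the uniform weighting (legitimate since every $F_i$ is nonempty) keeps all weights positive and keeps $w(F_i)>\tfrac12$. So suppose, for contradiction, that some $q$ satisfies $q(\{i:e_j\subseteq F_i\})\le\tfrac12$ for all $j$; restricting $q$ to its support (which preserves pairwise intersection) we may take $q$ to have full support. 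Put $\gamma(x)=q(\{i:x\in F_i\})$ for $x$ interior to an elementary arc; then $\gamma\le\tfrac12$ everywhere. If $\gamma(x_0)<\tfrac12$ for some $x_0$, the complements missing $x_0$ have total $q$-weight $>\tfrac12$ and, being intervals on the line $S^1\setminus\{x_0\}$ that pairwise intersect, share a common point $y$ by one-dimensional Helly; then $\gamma(y)>\tfrac12$, a contradiction.

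The remaining case $\gamma\equiv\tfrac12$ is the one I expect to require the most care. A constant weighted coverage means that at every point the total $q$-weight of complements starting there equals that of complements ending there (orient each $F_i$ counterclockwise), so the weighted arcs form a circulation on the directed graph with vertex set $P$; decompose it into simple directed cycles $C_1,\dots,C_r$ with positive weights $f_\ell$, putting $k_\ell=|C_\ell|$ and $t_\ell\ge 1$ for the winding number of $C_\ell$ about $S^1$. Because every arc is shorter than $S^1$ we have $t_\ell\le k_\ell-1$. Also, two consecutive arcs $F,F'$ of a cycle are disjoint as soon as $|F|+|F'|\le 1$, in which case $A\cup A'=S^1$, contradicting normality; hence $|F|+|F'|>1$ for each of the $k_\ell$ consecutive pairs of $C_\ell$, and summing them gives $2t_\ell>k_\ell$, i.e.\ $k_\ell\le 2t_\ell-1$. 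Therefore $k_\ell-2t_\ell\le-1$ for every $\ell$, so $\sum_\ell f_\ell(k_\ell-2t_\ell)\le-\sum_\ell f_\ell<0$; but $\sum_\ell f_\ell k_\ell=\sum_i q_i=1$ and $\sum_\ell f_\ell t_\ell=\int_{S^1}\gamma=\tfrac12$, which forces $\sum_\ell f_\ell(k_\ell-2t_\ell)=0$ --- a contradiction. Both cases are impossible, hence $v>\tfrac12$, and the proposition follows. The two points needing genuine care are the identification of the circulation together with its cycle decomposition, and the winding-number versus cycle-length bookkeeping; the rest is routine.
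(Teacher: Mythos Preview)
Your proof is correct and takes a genuinely different route from the paper's.

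The paper's argument is constructive and iterative: for each endpoint $v$ of an arc it manufactures an ``antipodal'' point $v'$ lying outside every arc that contains $v$. This is done one endpoint at a time, using one-dimensional Helly at each step to locate $v'$ in the common complement of the relevant arcs; along the way two new half-open arcs $[v,v')$, $[v',v)$ are added to the system to keep the construction consistent across steps. At the end the $4n$ points are sent to the vertices of a regular $4n$-gon, which forces every original arc to have image of length at most $\pi(1-1/2n)$.

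Your approach replaces this hands-on construction by linear-programming duality. Rewriting the task as finding a probability weighting on elementary arcs with every complement heavier than $1/2$, you compute the game value via von Neumann minimax and argue it exceeds $1/2$. Helly reappears, but only once, to dispose of the case $\gamma(x_0)<1/2$; the delicate boundary case $\gamma\equiv 1/2$ is handled by a separate circulation/cycle-decomposition argument and a winding-number count. The latter is where your proof carries real content beyond Helly, and your bookkeeping there is sound: consecutive complements in a cycle must overlap by normality, forcing $|F|+|F'|>1$, hence $k_\ell\le 2t_\ell-1$, which contradicts $\sum_\ell f_\ell(k_\ell-2t_\ell)=1-2\cdot\tfrac12=0$.

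What each approach buys: the paper's proof is shorter, entirely elementary, and yields an explicit bound on the resulting arc lengths (at most $\pi(1-1/2n)$) together with an actual algorithm for producing the homeomorphism. Your proof is non-constructive but exposes why the statement is ``really'' an LP feasibility fact, and isolates the combinatorial obstruction (a balanced circulation of complements) that normality rules out. One small quibble: you write ``by normality no arc equals $S^1$'', but this actually follows from the convention that arcs are closed intervals with two endpoints, not from normality per se; the conclusion is unaffected.
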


We suspect that Proposition~\ref{prop_normal} might be a known result, but we were not able to find it in the literature.

We show that further generalization of Corollary~\ref{cor_cylinder} to arbitrary arrangements of cylindrically monotone pseudosegments is not possible.

\begin{prop}
\label{prop_obstruction}
There exists an arrangement $\mathcal{A}$ of five cylindrically monotone pseudosegments and a pair of points $a,b$ not on the pseudosegments of $\mathcal{A}$ and not on the same vertical line such that $\mathcal{A}$ is not $(a,b)$-extendable in the family of all arrangements of cylindrically monotone pseudosegments.
\end{prop}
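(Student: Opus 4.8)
The plan is to construct an explicit small arrangement on the cylinder where the curves "wrap around" enough to block any monotone connection between two chosen points. The idea is that on the cylinder a cylindrically monotone curve from $a$ to $b$ must go around in one of the two directions, and in each direction it is forced to cross certain pseudosegments; by arranging five pseudosegments so that each of the two directions forces some pseudosegment to be crossed twice, we obtain the non-extendability. Concretely, I would place $a$ and $b$ close together (say with $x$-coordinates just left and right of some fixed vertical line) so that a connecting curve either stays ``local'' — but then some pseudosegments separating $a$ from $b$ locally force a double crossing — or it wraps all the way around the cylinder, in which case long pseudosegments whose projection covers most of $S^1$ get crossed twice.

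The key steps, in order, are as follows. First I would set up coordinates: parametrize the cylinder as $S^1\times\mathbb{R}$ with the ``vertical'' direction being the $\mathbb{R}$-coordinate, and fix $a=(\theta_a,0)$, $b=(\theta_b,0)$ with $\theta_a\ne\theta_b$. Second, I would describe the five pseudosegments explicitly, each as the graph of a continuous function from a closed arc of $S^1$ to $\mathbb{R}$; the design principle is that collectively they realize a configuration that is \emph{not} normal, i.e., some pair of projection arcs covers all of $S^1$, so Corollary~\ref{cor_cylinder} does not apply and there is room for an obstruction. Third, I would argue a case distinction on the connecting curve $\gamma$: since $\gamma$ is cylindrically monotone with endpoints at the same height $0$, its projection to $S^1$ is either one of the two arcs from $\theta_a$ to $\theta_b$ (the ``short'' side or the ``long'' side going the other way around), and possibly wrapping — but a cylindrically monotone curve cannot wrap more than once, and in fact its projection is exactly one of the two complementary arcs determined by $\{\theta_a,\theta_b\}$. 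Fourth, for each of these two possible ``homotopy types'' of $\gamma$, I would identify a pseudosegment of $\mathcal{A}$ whose projection arc is contained in (or sufficiently overlaps) that arc and whose heights at the two ends of the overlap force $\gamma$ to be on opposite sides, hence to cross it at least twice; a clean way to make this rigorous is an intermediate-value / parity argument comparing the sign of $\gamma(\theta)-\sigma(\theta)$ at the two endpoints of the common $x$-range of $\gamma$ and the pseudosegment $\sigma$.

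The main obstacle I expect is the bookkeeping in the case analysis: making sure that the five pseudosegments are simultaneously (a) pairwise a valid arrangement of pseudosegments (each pair crossing at most once, with all crossings proper), (b) not blocking each other in a way that accidentally reduces to the normal case, and (c) jointly forcing a double crossing in \emph{both} directions around the cylinder. It is plausible that fewer than five pseudosegments cannot achieve this (otherwise the authors would presumably use fewer), so the construction is presumably tight, and one must be careful that no clever ``diagonal'' curve sneaks through by exploiting slack in the heights. I would handle this by drawing the configuration carefully, labelling the $x$-coordinates of all relevant crossing points and endpoints in increasing order around $S^1$, and then verifying the forced double crossing by a short monotonicity argument in each case; a figure analogous to the ones already in the paper would make the verification transparent.

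Finally, I would remark that the obstruction is ``local'' in the sense that it is robust under small perturbations, so the five pseudosegments can be taken in general position (distinct $x$-coordinates of all endpoints and crossings), confirming that the phenomenon is genuine and not an artifact of degeneracy; this also contrasts with Theorem~\ref{theorem_main}, pinpointing that the obstruction is exactly the failure of normality.
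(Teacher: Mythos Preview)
Your strategy matches the paper's approach exactly: the paper simply exhibits an explicit arrangement of five cylindrically monotone pseudosegments (given as a figure) together with points $a,b$, and asserts that one checks by the obvious two-direction case analysis that every monotone curve from $a$ to $b$ must cross some pseudosegment twice. Your outline of the verification---split on which arc of $S^1$ the projection of $\gamma$ occupies, then use an intermediate-value/parity argument on heights---is precisely how one would carry out the ``easy verification'' the paper leaves to the reader.

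The gap is that your proposal never actually produces the arrangement. For an existence statement of this kind, the construction \emph{is} the proof: every step you list (``I would describe the five pseudosegments explicitly'', ``I would identify a pseudosegment\dots forced to be crossed twice'') is deferred rather than executed. The difficulty you yourself flag---ensuring the five curves simultaneously form a valid pseudosegment arrangement \emph{and} block both directions without leaving a diagonal escape route---is exactly the content of the proposition, and nothing in your write-up establishes that such a configuration exists. The paper resolves this by drawing one; your proposal, as it stands, only promises to.
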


Moreover, the decision problem of extendability of cylindrically monotone arrangements turns out to be NP-hard. 

\begin{theorem}
\label{theorem_hard}
Given an arrangement $\mathcal{A}$ of cylindrically monotone pseudosegments and a~pair of points $a,b$, it is NP-hard to decide whether $\mathcal{A}$ is $(a,b)$-extendable in the family of all arrangements of cylindrically monotone pseudosegments.
\end{theorem}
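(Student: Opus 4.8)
The plan is to reduce from a known NP-hard problem whose structure matches the combinatorial obstruction behind Proposition~\ref{prop_obstruction}. A natural candidate is a satisfiability-type problem, or better, a routing/ordering problem such as \emph{betweenness} or a circular variant of \textsc{3-SAT}, since the essential difficulty on the cylinder is that a cylindrically monotone curve from $a$ to $b$ must commit to one of the two directions around the cylinder, and then its cyclic position relative to the pseudosegments it crosses is tightly constrained. Concretely, I would construct, from an instance $\varphi$ of (say) \textsc{Planar Monotone 3-SAT} or \textsc{Not-All-Equal 3-SAT}, an arrangement $\mathcal{A}$ of cylindrically monotone pseudosegments together with points $a,b$ so that a connecting curve intersecting each pseudosegment at most once exists if and only if $\varphi$ is satisfiable. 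Variable gadgets would force a binary choice — encoded by which ``side'' of a bundle of pseudosegments the curve passes through, exploiting exactly the kind of non-normal configuration that appears in Proposition~\ref{prop_obstruction} — and clause gadgets would be local sub-arrangements that the curve can traverse (using at most one crossing per pseudosegment) precisely when at least one incident literal is set the satisfying way.

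The key steps, in order, are: (1) fix the source problem and its normal form, and recall its NP-hardness; (2) design a \emph{variable gadget}: a small cluster of cylindrically monotone pseudosegments such that any curve from $a$ to $b$ crossing each at most once is funneled through one of exactly two ``channels,'' which we label \textsc{true}/\textsc{false}; here I would reuse and iterate the five-pseudosegment obstruction of Proposition~\ref{prop_obstruction} as the atomic non-extendable-unless-you-pick-a-side device; (3) design a \emph{clause gadget} wired to the channels of its three variable gadgets, arranged so that passage through the clause region is blocked unless the curve enters via at least one of the three ``satisfying'' channels — realized by placing pseudosegments that each appear once globally but collectively wall off the region in the unsatisfied case; (4) route the pseudosegments coherently on the cylinder so that the gadgets do not interfere, i.e., every pseudosegment of the whole arrangement is still a single cylindrically monotone simple arc and the global picture respects the two-sided topology; (5) prove the two directions of correctness — a satisfying assignment yields an explicit at-most-once-crossing curve obtained by concatenating the chosen channels, and conversely any such curve reads off a consistent assignment because the variable gadgets force consistency and the clause gadgets enforce satisfaction; (6) observe the construction is polynomial in $|\varphi|$.

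The main obstacle I expect is step (4) together with the ``globality'' constraint in step (3): each pseudosegment may be crossed by the connecting curve only once \emph{in total}, so a clause gadget cannot simply re-use a private copy of a blocking curve for each way the curve might pass — the same arc must simultaneously serve its blocking role and be compatible with whatever the curve does elsewhere on the cylinder. Making the gadgets truly local, non-interfering, and each realizable by genuinely cylindrically monotone (hence $z$-monotone, intersecting every vertical line at most once) pseudosegments is where the care lies; I would handle it by laying out the variable gadgets in disjoint longitudinal bands around the cylinder, connecting them with clause gadgets in thin intermediate bands, and checking monotonicity band by band. A secondary subtlety is ensuring that the freedom to wind the curve around the cylinder (the ``two directions'' mentioned in the statement) does not give an unintended satisfying route; this can be prevented by making the total winding forced by the placement of $a$ and $b$ and by the variable gadgets, so that no extra global loop is available.
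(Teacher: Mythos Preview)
Your high-level plan---reduce from a \textsc{SAT} variant, build variable gadgets that force a binary left/right choice for the curve, build clause gadgets that can only be traversed when some literal is set correctly, and argue correctness and polynomiality---is exactly the strategy the paper uses (adapted from Arroyo et al.). So the skeleton is right.

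What is missing is the actual construction, and one of your concrete suggestions would not work as stated. You propose to ``reuse and iterate the five-pseudosegment obstruction of Proposition~\ref{prop_obstruction} as the atomic non-extendable-unless-you-pick-a-side device.'' But that configuration is not a two-channel switch: it admits \emph{no} $(a,b)$-extending curve in either direction, so it cannot serve as a variable gadget without substantial modification, and you do not say what that modification would be. The paper's variable gadget is quite different and simpler: for each variable $x_i$ one places the \emph{endpoints} of the pseudosegments corresponding to the literals $x_i$ and $\neg x_i$ in a small region, with all positive-literal pseudosegments continuing to the left and all negative-literal pseudosegments continuing to the right; the curve then either passes to the left of the region (crossing all positive pseudosegments) or to the right (crossing all negative ones). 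The ``other'' endpoints of these same pseudosegments live in the clause gadgets, so the single-crossing budget is what transmits the choice.

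Two further ingredients you would need and do not mention: first, a separate initial sub-arrangement (the paper's $\mathcal{A}_1$) that forces the curve to take one fixed direction around the cylinder and to thread through the gadget regions in a prescribed order---your ``longitudinal bands'' idea does not by itself do this. Second, a preprocessing step on the formula: the paper's clause gadget only works when each clause contains both a positive and a negative literal (so that positive pseudosegments exit one side and negative ones the other), and this is achieved by a standard satisfiability-preserving rewrite that eliminates all-positive and all-negative clauses. Without that normalization the clause gadget you sketch (``walled off in the unsatisfied case'') is hard to realize with cylindrically monotone arcs each crossed at most once globally.
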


The problem in Theorem~\ref{theorem_hard} is, in fact, NP-complete: the membership in NP is rather straightforward since an arrangement of $n$ cylindrically monotone pseudosegments can be encoded by a sequence consisting of $O(n)$ endpoints and $O(n^2)$ crossings, and by the above/below relations between the pseudosegments and the endpoints or crossings of other pseudosegments.


We prove Theorem~\ref{theorem_main} in Section \ref{section_plane} and we prove Corollary \ref{cor_cylinder}, Proposition \ref{prop_normal}, Proposition \ref{prop_obstruction} and Theorem \ref{theorem_hard} in Section \ref{section_cylinder}. 

\subsection{Related results}

A \emph{pseudoline} in the plane is an image of a Euclidean line under a homeomorphism of the plane; in other words, a pseudoline is a homeomorphic image of the set $\mathbb{R}$, unbounded in both directions. An \emph{arrangement of pseudolines} is a finite set of pseudolines such that every pair of them has exactly one crossing, and no other common intersection point. Pseudolines are also often defined in the projective plane, as nonseparating simple closed curves.

Levi's Enlargement Lemma~\cite{L26_Teilung} states that for every arrangement of pseudolines and every pair of points $a,b$ not on the same pseudoline, one can draw a new pseudoline through $a$ and $b$, crossing every curve from the given arrangement exactly once. The lemma has several alternative proofs in the literature~\cite{AMRS18_Levi,S19_Levi}.

Snoeyink and Hershberger~\cite{SH91_sweeping} generalized Levi's Lemma to a sweeping theorem for pseudoline arrangements, which allows ``rotating'' a new pseudoline through a given point, sweeping the whole plane in the process.

By a classical result of Goodman~\cite{Go80_proof},~\cite[Theorem 5.1.4]{FG17_pseudoline}, every arrangement of pseudolines can be transformed by a homeomorphism of the plane into an~arrangement of monotone pseudolines, or a so-called \emph{wiring diagram}. Therefore, monotone arrangements of pseudosegments can be considered as a generalization of pseudoline arrangements. On the other hand, Figure~\ref{fig_00_non_pseudolines} shows an example that not every monotone arrangement of pseudosegments can be seen as a ``restriction'' of a pseudoline arrangement, and so Theorem~\ref{theorem_main} does not easily follow from Levi's Lemma. See Arroyo, Bensmail and Richter~\cite[Figure~2]{ABR21_pseudolines} for more examples. Since a pseudoline (in the projective plane) can be considered as a~union of two internally disjoint pseudosegments (one bounded and the other one crossing the line at infinity), Theorem~\ref{theorem_main} can also be considered as a generalization of ``a~half'' of Levi's Lemma.

\begin{figure}
\begin{center}		
\includegraphics{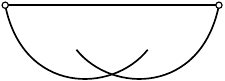}
\caption{An example of an arrangement of three pseudosegments that cannot be extended to pseudolines forming a pseudoline arrangement since any extension of the pseudosegments into pseudolines would contain a pair of pseudolines with two mutual intersections.}
\label{fig_00_non_pseudolines}
\end{center}		
\end{figure}

By $P_3+P_3$ we denote the union of two vertex-disjoint paths of length $2$. A simple drawing of $P_3+P_3$ that cannot be extended to a simple drawing of a complete graph was constructed by Eggleton~\cite[Diagram 15(ii)]{Egg73_crossing} 
and later rediscovered by the first author~\cite[Figure 9]{K13_improved}. 
Later a few more examples of non-extendable simple drawings were constructed~\cite[Figures 1, 10]{KPRT15_saturated}.
None of these drawings are homeomorphic to monotone drawings, which follows, for example, from Corollary~\ref{cor_drawings}, but some of them can easily be transformed into cylindrically monotone drawings; see Figure~\ref{figure_P3+P3}.

\begin{figure}
\begin{center}		
\includegraphics{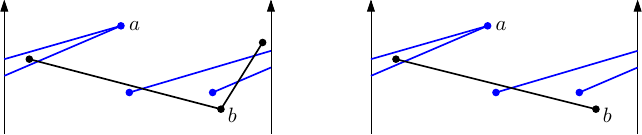}
\caption{Left: a simple cylindrically monotone drawing of $P_3+P_3$ where the edge~$ab$ cannot be added~\cite{Egg73_crossing,KPRT15_saturated,K13_improved} as a simple curve without crossing some edge of $P_3+P_3$. Since all edges of $P_3+P_3$ are incident to either $a$ or $b$, the added edge $ab$ would have at least two intersections with some edge of $P_3+P_3$.\\
Right: a simple cylindrically monotone drawing of $P_3+P_2$ where the edge $ab$ cannot be added as a cylindrically monotone curve without intersecting some edge of $P_3+P_2$ twice.}
\label{figure_P3+P3}
\end{center}		
\end{figure}

The first author together with Pach, Radoi\v{c}i\'{c} and T\'{o}th~\cite{KPRT15_saturated} proved that for every $k\ge 1$, for every arrangement $\mathcal{A}$ of $k$-strings and every pair of points $a,b$ not on the same curve of $\mathcal{A}$, there exists a simple curve joining $a$ and $b$ intersecting every curve of $\mathcal{A}$ at most $2k$ times. They also constructed examples showing that the constant $2k$ cannot be improved.

Arroyo, Klute, Parada, Vogtenhuber, Seidel and Wiedera~\cite{AKPVSW23_inserting} showed that it is NP-hard to decide, given an arrangement~$\mathcal{A}$ of pseudosegments and a pair of points $a,b$, whether $a$ and $b$ can be joined by a simple curve crossing each pseudosegment of $\mathcal{A}$ at most once. Our proof of Theorem~\ref{theorem_hard} is a simple adaptation of this result to cylindrically monotone arrangements. Recently, Aichholzer Orthaber and Vogtenhuber \cite{AOV24_Separable} proved a result similar to Theorem~\ref{theorem_main} where they showed that the crossing-minimizing and the so-called separable drawings can be extended to simple drawings of the complete graph but not necessarily to the crossing-minimizing or separable drawings of the complete graph.

Arroyo, Bensmail and Richter~\cite{ABR21_pseudolines} studied a slightly different extendability question: given an~arrangement of pseudosegments, can it be extended to an~arrangement of pseudolines by extending every given pseudosegment to a pseudoline? They determined the full infinite set of minimal obstructions, and found a polynomial time algorithm for detecting the obstructions and extending the arrangement.

\section{Monotone arrangements in the plane} \label{section_plane}

We start with a few definitions and tools for analyzing $x$-monotone arrangements. Given a pair of points $a,b$ in the plane, we write $a\prec b$ if $a$ has a smaller $x$-coordinate than $b$. Clearly,~$\prec$ is a strict linear order on the points of any monotone curve. 

We can naturally talk about objects lying ``below'' and ``above'' monotone curves. Let~$a,b$ be points such that $a\prec b$.  
For any monotone curve $\gamma$ we denote by $\gamma[a,b]$ and $\gamma(a,b)$ the subset of $\gamma$ formed by the points $x$ of $\gamma$ satisfying ${a\preceq x \preceq b}$ and $a\prec x \prec b$, respectively. Similarly, for an arrangement $\mathcal{A}$ of monotone pseudosegments we denote by $\mathcal{A}[a,b]$ the arrangement of pseudosegments where we replace each $\gamma\in \mathcal{A}$ by $\gamma[a,b]$. 

By \emph{consecutive intersections} of two monotone curves with finitely many intersections we mean consecutive intersections with respect to their $x$-coordinates.  
Let $\alpha, \beta$ be two monotone curves with finitely many intersections.
Let $a,b$ be two consecutive intersections of $\alpha, \beta$ such that $a \prec b$. Then the only intersections of $\alpha[a,b]$ with $\beta[a,b]$ are the points $a$ and $b$. In this case we say that the curves $\alpha$ and $\beta$ form a \emph{bigon}. Furthermore, if $\alpha(a,b)$ lies above $\beta(a,b)$ we say that $\alpha$ and $\beta$ form an \emph{$\alpha$-top}, or equivalently, a \emph{$\beta$-bottom} bigon. Note that in general $\alpha$ and $\beta$ can form both an $\alpha$-bottom and a $\beta$-bottom bigon.

The \emph{lower envelope} $\lowerenv{\mathcal{U}}$ of a set $\mathcal{U}$ of curves is the set of all points $p$ of these curves such that no other point of any curve of $\mathcal{U}$ with the same $x$-coordinate as $p$ is below $p$. Note that if $\mathcal{U}$ is an arrangement of monotone pseudosegments, then $\lowerenv{\mathcal{U}}$ is a finite union of connected parts of pseudosegments. 

We continue by proving Theorem~\ref{theorem_main}, followed by a discussion of the time complexity of extending the arrangement.

\subsection{Proof of Theorem~\ref{theorem_main}}

Let $\mathcal{A}$ be an arrangement of monotone pseudosegments.
Let $a,b$, with $a \prec b$, be points that are not on the same pseudosegment of $\mathcal{A}$. We need to find a~monotone curve from $a$ to $b$ that intersects every curve of $\mathcal{A}$ at most once. Since every curve of $\mathcal{A}$ is monotone, we can without loss of generality assume that $\mathcal{A} = \mathcal{A}[a,b]$.

\begin{figure}[tb]
\begin{center}		
	\includegraphics{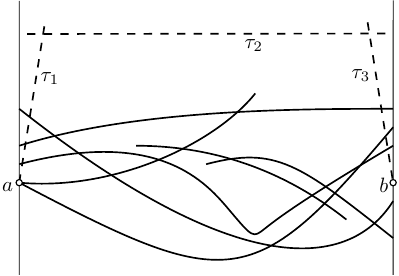}
	\caption{An arrangement of monotone pseudosegments with three added segments $\tau_1, \tau_2, \tau_3$ connecting points $a, b$ ``from above''.}
	\label{fig01_A_scarkou}
\end{center}		
\end{figure}

Let $\mathcal{A}'$ be an arrangement of monotone pseudosegments formed by all pseudosegments of $\mathcal{A}$ together with three new segments $\tau_1, \tau_2, \tau_3$, defined as follows. The segment $\tau_1$ is an almost vertical segment starting in $a$ and ending in some new point to the right of $a$ and above all pseudosegments of $\mathcal{A}$. Similarly, $\tau_3$ is an almost vertical segment ending in $b$ and starting in some new point to the left of~$b$ and above all pseudosegments of $\mathcal{A}$. Finally, $\tau_2$ is a horizontal segment crossing $\tau_1$ and $\tau_3$, and lying entirely above all pseudosegments of~$\mathcal{A}$; see Figure~\ref{fig01_A_scarkou}. 
In this way, $\lowerenv{\{\tau_1, \tau_2, \tau_3\}}$ is a monotone curve connecting $a$ and $b$ ``from above'', so that every pseudosegment $\gamma \in \mathcal{A}$ intersects it at most twice. Furthermore no $\gamma \in \mathcal{A}$ forms a $\gamma$-top bigon with $\lowerenv{\{\tau_1, \tau_2, \tau_3\}}$ (it can only form a $\gamma$-bottom bigon).

In order to find an extending curve we do the following. We find a nonempty subset $\mathcal{U}\subseteq\mathcal{A}'$ of pseudosegments such that the lower envelope of $\mathcal{U}$ is a monotone curve connecting $a$ to $b$, intersecting every pseudosegment of $\mathcal{A}'\setminus\mathcal{U}$ at most once. Furthermore, we find $\mathcal{U}$ so that no pseudosegment $\alpha$ touches $\lowerenv{\mathcal{U}}$ from below in an inner point of $\alpha$. After finding such~$\mathcal{U}$, a new pseudosegment connecting $a$ and $b$ can clearly be drawn slightly below the lower envelope of $\mathcal{U}$ and will indeed intersect every pseudosegment of $\mathcal{A}'$ at most once. Thus, if such $\mathcal{U}$ exists, $\mathcal{A}'$, and consequently $\mathcal{A}$, is $(a,b)$-extendable.

We find $\mathcal{U}$ inductively. We start with $\mathcal{U}_0 = \{\tau_1, \tau_2,  \tau_3 \}$ and always look at the lower envelope of $\mathcal{U}_i$. In the $i$th step we select an arbitrary pseudosegment $\gamma_i$ of $\mathcal{A}'\setminus \mathcal{U}_{i-1}$ intersecting $\lowerenv{\mathcal{U}_{i-1}}$ at least twice. If there is no such $\gamma_i$ then $\mathcal{U} = \mathcal{U}_{i-1}$ and we are done. Otherwise, we set $\mathcal{U}_{i}=\mathcal{U}_{i-1} \cup \{\gamma_i\}$. The number of pseudosegments is finite, so this process finishes with a set $\mathcal{U}$ such that the lower envelope of $\mathcal{U}$ intersects every pseudosegment of $\mathcal{A}'\setminus\mathcal{U}$ at most once.

Additionally, we prove that the induction preserves the following invariants for every~$\mathcal{U}_i$. 
\begin{enumerate}
\item[(I1)] No pseudosegment $\alpha$ of $\mathcal{A}' \setminus \mathcal{U}_{i}$ forms an $\alpha$-top bigon with $\lowerenv{\mathcal{U}_i}$.

\item[(I2)] No pseudosegment $\alpha$ of $\mathcal{A}' \setminus \mathcal{U}_{i}$ touches $\lowerenv{\mathcal{U}_i}$ from below in an inner point of $\alpha$.

\item[(I3)] The lower envelope of $\mathcal{U}_i$ is connected and contains $a$ and $b$.

\end{enumerate}

In particular, by (I3), the lower envelope of $\mathcal{U}$ is a monotone curve connecting $a$ to $b$ and, by (I2), no pseudosegment $\alpha$ of $\mathcal{A}' \setminus \mathcal{U}$ touches $\lowerenv{\mathcal{U}}$ from below in an inner point of $\alpha$. Since $\lowerenv{\mathcal{U}}$ intersects every pseudosegment of $\mathcal{A}'\setminus\mathcal{U}$ at most once by its construction, $\mathcal{A}$ is $(a,b)$-extendable by the previous discussion. Thus, it suffices to prove the correctness of these invariants to finish the proof.

The invariants hold for $\mathcal{U}_0$ by the construction of $\tau_1, \tau_2$ and $\tau_3$. Suppose all invariants hold for $\mathcal{U}_{i-1}$. 
In particular, $\lowerenv{\mathcal{U}_{i-1}}$ is a monotone curve connecting $a$ to $b$ by invariant~(I3).
We show that all invariants also hold for $\mathcal{U}_{i}$.

The pseudosegment $\gamma_i$ intersects $\lowerenv{\mathcal{U}_{i-1}}$ at least twice. We show that $\gamma_i$ intersects $\lowerenv{\mathcal{U}_{i-1}}$ exactly twice. Suppose, for contradiction, that there are three consecutive intersections $c, d$ and $e$ of $\gamma_i$ with $\lowerenv{\mathcal{U}_{i-1}}$ such that $c\prec d \prec e$. Then $\gamma_i[c,d]$ with $\lowerenv{\mathcal{U}_{i-1}}[c,d]$ forms a bigon and so does $\gamma_i[d,e]$ with $\lowerenv{\mathcal{U}_{i-1}}[d,e]$. By invariant (I1) both of these bigons must be $\lowerenv{\mathcal{U}_{i-1}}$-top bigons. However, in this case $\gamma_i$ touches $\lowerenv{\mathcal{U}_{i-1}}$ from below in the point $d$. That is not possible by invariant (I2). Thus, $\gamma_i$ intersects $\lowerenv{\mathcal{U}_{i-1}}$ exactly twice. Furthermore, by invariant (I1), $\gamma_i$ and $\lowerenv{\mathcal{U}_{i-1}}$ form a $\gamma_i$-bottom bigon. 

Let $x$ and $y$ be the two intersection points of $\gamma_i$ and $\lowerenv{\mathcal{U}_{i-1}}$. Refer to Figure~\ref{fig02_gamma_i}. Since $\gamma_i$ and $\lowerenv{\mathcal{U}_{i-1}}$ form a $\gamma_i$-bottom bigon, the only part of the curve $\gamma_i$ that lies below $\lowerenv{\mathcal{U}_{i-1}}$ is exactly $\gamma_i(x,y)$.
Thus, the lower envelope of $\mathcal{U}_{i-1} \cup \{\gamma_i\}$ is a monotone curve connecting $a$ and $b$. Therefore, invariant (I3) holds also for $\mathcal{U}_i$.

\begin{figure}
\begin{center}	
	\includegraphics{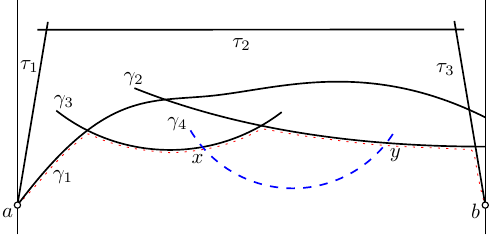}
	\caption{Induction step in the proof of Theorem~\ref{theorem_main}. In the $i$th step (fourth step in the figure) we add pseudosegment $\gamma_i$ (dashed) intersecting the lower envelope (dotted) of the previous segments twice. The lower envelope remains a connected curve connecting $a$ with $b$ even after this addition.} 
	\label{fig02_gamma_i}
\end{center}		
\end{figure}

Now, suppose that invariant (I2) does not hold, that is, there exists some pseudosegment~$\beta$ of $\mathcal{A}' \setminus \mathcal{U}_i$ that touches $\lowerenv{\mathcal{U}_i}$ from below in an inner point of~$\beta$. Refer to Figure~\ref{fig02.5_above_touch}. Since $\mathcal{U}_{i} = \mathcal{U}_{i-1} \cup \{\gamma_i\}$, the curve $\beta$ has to touch $\gamma_i$ or $\lowerenv{\mathcal{U}_{i-1}}$ in an inner point of $\beta$, a contradiction. Hence, invariant~(I2) also holds for $\mathcal{U}_i$. Note that the analogous statement for touchings from above does not hold, that is, there may exist some pseudosegment $\alpha$ of $\mathcal{A}' \setminus \mathcal{U}_i$ that both touches $\lowerenv{\mathcal{U}_i}$ from above in an inner point of $\alpha$ and touches neither $\gamma_i$ nor $\lowerenv{\mathcal{U}_{i}}$ in an inner point of $\alpha$.

\begin{figure}
\begin{center}	
	\includegraphics{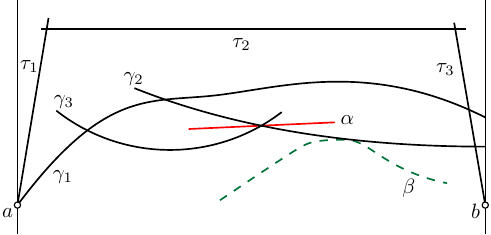}
	\caption{Induction step in the proof of Theorem~\ref{theorem_main}. During the selection of $\mathcal{U}$ some pseudosegments may touch $\lowerenv{\mathcal{U}}$ from above but never from below. Pseudosegment~$\alpha$ touches $\lowerenv{\{\tau_1, \tau_2, \tau_3, \gamma_1, \gamma_2 , \gamma_3\}}$ from above. On the other hand, $\beta$ cannot be in the same arrangement of pseudosegments since it touches $\gamma_2$.} 
	\label{fig02.5_above_touch}
\end{center}		
\end{figure}

Finally, suppose that invariant (I1) does not hold, that is, there exists some pseudosegment $\rho$ of $\mathcal{A}' \setminus \mathcal{U}_{i}$ that together with $\lowerenv{\mathcal{U}_{i}}$ forms a $\rho$-top bigon. Call $s$ and $t$ the vertices of this bigon and assume $s\prec t$. See Figure~\ref{fig03_rho}.

If $s$ and $t$ both lie on $\gamma_i[x,y]$, then $\rho$ and $\gamma_i$ intersect twice, a contradiction. Otherwise $s$ or $t$ does not lie on $\gamma_i[x,y]$. Without loss of generality assume that~$t$ does not lie on $\gamma_i[x,y]$ and $y\prec t$. Then $s$ either lies on $\lowerenv{\mathcal{U}_{i-1}}$ or below it. In both cases $\rho[s,t]$ intersects $\lowerenv{\mathcal{U}_{i-1}}$ in some point other than $t$ since $\rho[s,t]$ together with $\lowerenv{\mathcal{U}_{i}}$ forms a $\rho$-top bigon. Denote the rightmost intersection of $\rho[s,t]$ and $\lowerenv{\mathcal{U}_{i-1}}$ other than $t$ by $u$. Then $\rho(u,t)$ lies above $\lowerenv{\mathcal{U}_{i-1}}$ and so $\rho[u,t]$ together with $\lowerenv{\mathcal{U}_{i}}$ forms a $\rho$-top bigon, a contradiction with invariant~(I1) for $\mathcal{U}_{i-1}$. This concludes the proof of Theorem~\ref{theorem_main}.

\begin{figure}[tb]
\begin{center}	
	\includegraphics{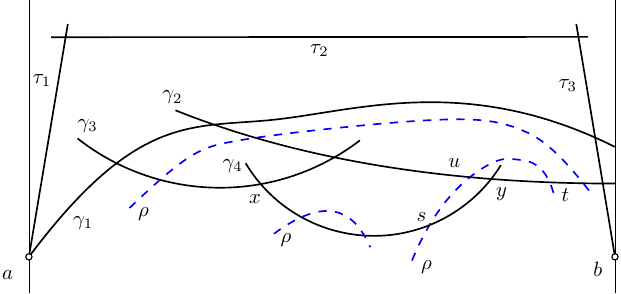}
	\caption{Induction step in the proof of Theorem~\ref{theorem_main}. If there was some pseudosegment $\rho$ that together with the lower envelope ($\lowerenv{\{\tau_1, \tau_2, \tau_3, \gamma_1, \gamma_2, \gamma_3, \gamma_4\}}$ in the picture) formed a $\rho$-top bigon, it would either form a $\rho$-top bigon with the previous lower envelope or intersect twice the segment that was added as the last. In the picture, there are three such possible $\rho$'s.} 
	\label{fig03_rho}
\end{center}		
\end{figure}

\subsection{Complexity of extending the arrangement}
\label{subsection_algorithm}

We now discuss how the proof of Theorem~\ref{theorem_main} can be turned into an algorithm. The outline of the algorithm can already be seen in the proof of Theorem~\ref{theorem_main}. Two main issues remain unresolved. Firstly, how to represent the arrangement, and secondly,  how to efficiently find a pseudosegment $\gamma_i$ of $\mathcal{A}'\setminus \mathcal{U}_{i-1}$ intersecting $\lowerenv{\mathcal{U}_{i-1}}$ at least twice. We explain what representation we can use and how to implement the algorithm in time linear in the number of incidences between pseudosegments and their endpoints or intersection points.

\subsubsection{Representation of a monotone arrangement}

We assume we have an arrangement $\mathcal{A}$ of $n$ pseudosegments. 
Combinatorially, the only points of interest on the pseudosegments are the endpoints of the pseudosegments and the intersection points of at least two pseudosegments. From now on, in the description of the algorithm, we will call these points of interest \emph{vertices}. Let $m$ be the total number of incidences between the pseudosegments and the vertices.

For every vertex $a$, we denote by $H^+_a$ and $H^-_a$  the open halfplane to the right and left, respectively, of the vertical line containing $a$. Let $x_a$ be the $x$-coordinate of $a$ and let $x_a^+$ and $x_a^-$ be $x$-coordinates that are slightly larger and slightly smaller, respectively, than $x_a$ so that there are no vertices with $x$-coordinates in the intervals $(x_a^-, x_a)$ and $(x_a, x_a^+)$.
For the input and output we use the following representation of arrangements of monotone pseudosegments: 

\begin{itemize}
    \item Every pseudosegment corresponds to a unique label
    \item Every vertex $a$ has its own structure containing its $x$- and $y$-coordinate and a list of incident pseudosegments together with the information whether $a$ is a left or right endpoint or an inner point of the pseudosegment. Furthermore, for every pseudosegment $\alpha$ that has $a$ as its left endpoint, the structure of $a$ contains the pair $(\alpha, f_a(\alpha))$ where $f_a(\alpha)$ is the label of the closest pseudosegment below $\alpha$ that intersects both $a$ and $H^+_a$ (or $f_a(\alpha)=\mathrm{NULL}$ if $\alpha$ is the bottommost pseudosegment with this property).
    If all pseudosegments incident with $a$ have $a$ as their left endpoint, the structure of $a$ contains the label $f(a)$ of the closest pseudosegment or vertex below $a$ (or $f(a)=\mathrm{NULL}$ if there is none). 
    \item The array $L$ of vertices is ordered in ascending order first by their $x$-coordinates and then by their $y$-coordinates.
\end{itemize} 

The main operation we use on this representation is \emph{Sweep}, during which we sweep the plane from left to right and maintain the vertical order of pseudosegments in linear time. We do it by processing the vertices in $L$ in the given order. At each vertex $a$ of $L$ we reverse the vertical order of all pseudosegments that contain $a$ as an inner point, we delete all pseudosegments that contain $a$ as their right endpoint, and, finally, we insert each pseudosegment $\alpha$ that has $a$ as its left endpoint to an appropriate position in the order determined by $f_a(\alpha)$ and $f(a)$. We process all vertices with the same $x$-coordinate together. For any vertex $a$, we compute the vertical order at coordinates $x_a^-$ and $x_a^+$, not at the point $a$ itself. 
Overall, the procedure takes time $O(m)$. 

Let us now discuss several aspects of this representation. The coordinates of the vertices are not strictly necessary. Instead, it would be sufficient to have a partition of the set of vertices into several classes, a linear ordering of the classes, and a linear ordering of each class. Here each class of the partition would represent a set of vertices with the same $x$-coordinate.
Note that we cannot assume that no two vertices have the same $x$-coordinate because if we tried to ``rotate'' the arrangement a bit to avoid this, in the rotated arrangement we could weave a monotone pseudosegment in between the two rotated points, but the corresponding pseudosegment in the original arrangement would not be monotone. Alternatively, the lexicographic order of $L$ is not strictly needed if the coordinates of the vertices are given. In this case, we could compute the ordering in time $O(m\log(m))$.

\subsubsection{Initialization and auxiliary structures}

In the input, we need to know the position of the points $a$ and $b$. We assume we are given their $(x,y)$-coordinates and also which vertices or pseudosegments are right below $a$ and $b$. We also assume that $\mathcal{A} = \mathcal{A}[a,b]$, otherwise we would first shorten the arrangement. 

Firstly, we construct $\mathcal{A}'=\mathcal{A}\cup\{\tau_1,\tau_2,\tau_3\}$ from $\mathcal{A}$. To add $\tau_1$, we first create a new vertex $a$ if $a$ was not a vertex before. That is straightforward, because in that case there are no incidences with $a$ and to $f(a)$ we assign the vertex or pseudosegment that is right below $a$, which is a part of the input. Next, we compute the vertical order of pseudosegments at the coordinate $x_a^+$. The new pseudosegment $\tau_1$ then intersects all pseudosegments in this order above $a$. It is straightforward to add all these intersections and the new pseudosegment to the representation. We add $\tau_2$ and $\tau_3$ similarly.

We now create a more detailed representation of the arrangement that will help us describe an efficient algorithm. We create the following data structures.

\begin{itemize}
	\item We define two types of incidences between vertices and pseudosegments. A ``$+$''-incidence $(p, \alpha)^+$ is an incidence between a vertex $p$ and a pseudosegment $\alpha$ that intersect $H^+_p$. Similarly, a ``$-$''-incidence $(p, \alpha)^-$ is an incidence between a vertex $p$ and a pseudosegment $\alpha$ that intersect $H^-_p$. We note that if $p$ is an interior point of $\alpha$, then the incidence between $p$ and $\alpha$ is of both types, ``$+$'' and ``$-$''. We define the (partial) \emph{horizontal order} on these incidences first by comparing the $x$-coordinates of their vertices, and in addition, for every pair of incidences $(p,\alpha)^-$ and $(p,\beta)^+$ sharing the same vertex $p$, the ``$-$''-incidence comes before the ``$+$''-incidence. 
    
	\item For every pseudosegment $\alpha$ we create an array of ``$+$'' and ``$-$''-incidences containing $\alpha$, linearly ordered according to their horizontal order. For example, a pseudosegment $\beta$ starting in $p$, ending in $q$ and containing one more vertex $r$ in the middle is represented by the array $[(p,\alpha)^+, (r,\alpha)^-, (r,\alpha)^+, (q,\alpha)^-]$.
	
	\item  We use the original structure for vertices but we add two lists into it. For each vertex $p$ we add a list $V^-(p)$ containing pointers to all the ``$-$''-incidences between $p$ and pseudosegments ordered from top to bottom at the $x_p^-$ coordinate. Similarly, we add a list $V^+(p)$ containing pointers to all the ``$+$''-incidences between $p$ and pseudosegments ordered from top to bottom at the $x_p^+$ coordinate.  To each incidence of either form we also add pointers to its location in $V^+(p)$ and $V^-(p)$.

\end{itemize}
This can be done by a sweep from left to right, so in time $O(m)$.

Note that given two incidences between a point $p$ and two pseudosegments $\alpha, \beta$ incident with $p$, we can access any cyclical interval of pseudosegments around $p$ in between $\alpha$ and $\beta$ in time linear in the size of the interval. 

The main part that remains is to describe how to represent the lower envelope in each step of the algorithm, how to select the pseudosegment $\gamma_i$ effectively, and then how to update the lower envelope. We maintain the following auxiliary structures.
\begin{itemize}
    
    \item A linked list $X$ of pointers to the ``$+$'' and ``$-$''-incidences on $\lowerenv{\mathcal{U}_{i-1}}$ linearly ordered according to their horizontal order. To each vertex $p$ we add pointers to the incidences in this list that contain $p$.

    \item A linked list $Y$ that for each pseudosegment $\alpha$ from $\mathcal{A}' \setminus \mathcal{U}_{i-1}$ contains the list of all intersections between $\lowerenv{\mathcal{U}_{i-1}}$ and $\alpha$ that are either proper intersections or endpoints of $\alpha$ where $\alpha$ continues below $\lowerenv{\mathcal{U}_{i-1}}$. From the proof of Theorem~\ref{theorem_main}, we know that any item from $Y$ contains at most two intersections. Together with these intersections, we also store the pointers to their positions in $X$. For each pseudosegment, we also include a pointer to this list.
    
    \item A set of pseudosegments $Z$ from $\mathcal{A}' \setminus \mathcal{U}_{i-1}$ intersecting the lower envelope of $\mathcal{U}_{i-1}$ twice.
    
    \item For each $\alpha \in \mathcal{A}'$ we remember whether it is in $\mathcal{U}_{i-1}$.
\end{itemize}

These structures can be constructed for $\mathcal{U}_0$ in time $O(n)$ because points $a$ and $b$ are incident to at most $n$ pseudosegments and all of the other at most $O(n)$ vertices on $\tau_1, \tau_2, \tau_3$ are simple intersections of only two pseudosegments.

\subsubsection{Implementation of the inductive step}

In each inductive step, we select $\gamma_i$ as an arbitrary pseudosegment from $Z$ and delete it from $Z$. Let $x$ and $y$ be its intersections with the lower envelope of $\mathcal{U}_{i-1}$ (we get them from $Y$). Let $(x,\alpha)^+$ be the ``$+$''-incidence between $x$ and $\lowerenv{\mathcal{U}_{i-1}}$ and $(y,\beta)^-$ be the ``$-$''-incidence between $y$ and $\lowerenv{\mathcal{U}_{i-1}}$.  
We go along the boundary of the bigon formed by $\lowerenv{\mathcal{U}_{i-1}}$ and $\gamma_i$ and update the data structures $X,Y$ and $Z$ for $\mathcal{U}_{i-1}$ to  $X,Y$ and $Z$ for $\mathcal{U}_{i}$. The implementation is straightforward using the described data structures: we delete the intersection information of $\lowerenv{\mathcal{U}_{i-1}[x,y]}$ with incident pseudosegments below it and add the intersection information of $\gamma_i[x,y]$ with incident pseudosegment below it. At point $x$ we need to delete the intersection information with pseudosegments from the incidences from $V^+(x)$ between $(x,\alpha)^+$ and $(x,\gamma_i)^+$. At point $y$ we need to delete the intersection information with pseudosegments from the incidences from $V^-(y)$ between $(y,\beta)^-$ and $(y,\gamma_i)^-$.
As noted before we can access any cyclical interval of pseudosegments around any vertex in time linear in the size of the interval. Furthermore, all bigons in the algorithm are disjoint, so during the algorithm we spend at most a constant amount of time for each incidence between a pseudosegment and a vertex.

At the end we get the list $X$ describing the lower envelope of $\mathcal{U}$ and the list $Y$ that contains all the the intersection of $\lowerenv{\mathcal{U}}$ with pseudosegments that continue below $\lowerenv{\mathcal{U}}$.

\subsubsection{Adding the new pseudosegment into the arrangement}
We have the linked lists $X$ and $Y$ describing $\lowerenv{\mathcal{U}}$ and its intersections with $\mathcal{A}' \setminus \mathcal{U}$ that continue below $\lowerenv{\mathcal{U}}$. The new pseudosegment lies slightly below $\lowerenv{\mathcal{U}}$. Thus, it intersect exactly the same pseudosegments that are described in $Y$. The intersections with the new pseudosegment will be slightly below and slightly to the left or right of the intersections with $\lowerenv{\mathcal{U}}$. Let $p$ be an old intersection of $\lowerenv{\mathcal{U}}$ with a pseudosegment $\gamma$ from $Y$ with ``$+$'' and ``$-$''-incidences $(p,\alpha)^-$ and $(p,\beta)^+$ on $\lowerenv{\mathcal{U}}$. To decide whether the new intersection $p'$ below $p$ should be to the right or to the left of $p$, we just need to find $\gamma$ either below $\alpha$ in $V(p)^-$ or below $\beta$ in $V(p)^+$. Given all this information it is straightforward to add the new pseudosegment into the original representation of $\mathcal{A}$ to update the arrangement.

\section{Cylindrically monotone arrangements}  \label{section_cylinder}

Recall that we represent the cylinder as the surface $S^1 \times \mathbb{R}$. If we imagine cutting and unrolling the cylinder, we can represent it in the plane as a vertical strip whose left and right sides represent the same vertical line of the cylinder. We can also select the orientation so that the counter-clockwise direction on the cylinder corresponds to the left-to-right direction in the plane. For points $a,b \in S^1$ we denote by $[a,b]$ the counter-clockwise circular arc in $S^1$ from $a$ to $b$ and we call it an \emph{interval}. For points $c$ and $d$ on the cylinder we call a monotone curve starting in $c$ and ending in $d$ on the cylinder \emph{left-oriented} or \emph{right-oriented} if its projection by the canonical projection map $\pi$ to $S^1$ is a clockwise circular arc or a counter-clockwise circular arc, respectively. 

\subsection{Normal cylindrically monotone arrangements}

We prove Corollary~\ref{cor_cylinder} using the alternative characterization of normal cylindrically monotone arrangements provided by Proposition~\ref{prop_normal}. Then we prove the proposition itself.

\subsubsection{Proof of Corollary \ref{cor_cylinder}}
Let $\mathcal{A}$ be a normal arrangement of cylindrically monotone pseudosegments. Let $a,b$ be a pair of points on the cylinder but not on the same vertical line and not on the same pseudosegment of $\mathcal{A}$. We need to show that $\mathcal{A}$ is $(a,b)$-extendable in the family of all normal arrangements of cylindrically monotone pseudosegments. That is, we need to find a cylindrically monotone simple curve $\gamma$ with endpoints $a, b$ that intersects every curve of $\mathcal{A}$ at most once such that $\mathcal{A} \cup \{\gamma\}$ is still normal.

Let $\pi$ be the orthogonal projection from $S^1 \times \mathbb{R}$ to $S^1$. Let $f$ be the homeomorphism from Proposition \ref{prop_normal}. We define a homeomorphism $g\colon S^1 \times \mathbb{R} \rightarrow S^1 \times \mathbb{R}$ as $g(x,y) = (f(x),y)$. The image $g(\mathcal{A}) = \left\{g[\alpha];\,\alpha \in \mathcal{A}\right\}$ of $\mathcal{A}$ under $g$ is a homeomorphic arrangement of cylindrically monotone pseudosegments whose orthogonal projections to $S^1$ have lengths smaller than $\pi$. Furthermore, $g$ maps every vertical line onto a vertical line. Thus, extending $\mathcal{A}$ by a pseudosegment from $a$ to $b$ is equivalent to extending $g(\mathcal{A})$ by a pseudosegment from $g(a)$ to $g(b)$.

We look at the orthogonal projections $\pi(g(a)), \pi(g(b))$ of $g(a), g(b)$ to $S^1$. Without loss of generality we may assume that the length of $\left[\pi(g(a)),\pi(g(b))\right]$ is smaller than $\pi$. Since orthogonal projections of all pseudosegments of $g(A)$ to $S^1$ have lengths smaller than $\pi$, then
\[
\left\{ g[\alpha] \cap \left(\left[\pi(g(a)),\pi(g(b))\right] \times \mathbb{R}\right);\,\alpha \in \mathcal{A}\right\} \setminus \{\emptyset\}
\]
is an arrangement of cylindrically monotone pseudosegments on a cylindrical strip. This arrangement is homeomorphic to an arrangement of monotone pseudosegments in the plane (by unrolling the strip to the plane). Thus, by Theorem~\ref{theorem_main}, we can extend the arrangement in the plane by a monotone pseudosegment $\beta$ connecting the images of $g(a)$ and $g(b)$ after unrolling. Let $\beta'$ be the cylindrically monotone curve on the cylinder corresponding to $\beta$. Clearly, $\beta'$ is a curve connecting $g(a)$ and $g(b)$ and intersecting every pseudosegment from $g(\mathcal{A})$ at most once. Moreover, since the length of $\left[\pi(g(a)),\pi(g(b))\right]$ is smaller than $\pi$, the resulting arrangement is still normal. Thus, $\gamma = g^{-1}(\beta')$ is the desired curve extending the original arrangement $\mathcal{A}$.

\begin{rem_}
Let us note that we could also find the extending curve $\gamma$ without the help of Proposition~\ref{prop_normal} by ``cutting'' the cylinder along the vertical lines passing through $a$ and $b$ and showing that one of the resulting parts corresponds to the planar version.
\end{rem_}


\subsubsection{Proof of Proposition~\ref{prop_normal}}
Let $\mathcal{C}$ be a normal system of $n$ arcs in the unit circle $S^1$. Without loss of generality, we assume that their endpoints are pairwise distinct. 
Let $V=\{v_1,v_2,\dots, v_{2n}\}$ be the set of the $2n$ endpoints of the arcs from $\mathcal{C}$, labeled arbitrarily.

Our first goal is to define, for each endpoint $v\in V$, its ``antipodal'' point $v'$, so that these new $2n$ points, together with the original $2n$ endpoints from $V$, are pairwise distinct, no arc from $\mathcal{C}$ contains both $v$ and $v'$, and for every pair $v,w\in V$ with $v\neq w$, the interval $[v,v']$ contains exactly one point from $\{w,w'\}$; that is, the endpoints of the intervals $[v,v']$ and $[w,w']$ alternate on the circle. 

Let $\mathcal{C}_0=\mathcal{C}$. In $2n$ steps, we will define a sequence of points $v'_1, v'_2, \dots, v'_{2n}$ and a sequence of sets $\mathcal{C}_1, \mathcal{C}_2, \dots, \mathcal{C}_{2n}$. We will make sure the following properties are satisfied:
\begin{enumerate}
\item[(P1)] Each $\mathcal{C}_i$ is a set of $n+2i$ intervals; in particular, $\mathcal{C}_i=\mathcal{C} \cup \bigcup_{j=1}^i \{[v_j,v'_j),[v'_j,v_j)\}$.
\item[(P2)] The points $v_1,v_2,\dots,v_{2n},v'_1,v'_2,\dots,v'_i$ are pairwise distinct.
\item[(P3)] No pair of the intervals from $\mathcal{C}_i$ with nonempty intersection covers the whole circle~$S^1$.
\end{enumerate}

By the assumption, all three properties (P1)--(P3) are satisfied for $\mathcal{C}_0$.

Let $i\in \{1,\dots,2n\}$. Assume that we have defined $\mathcal{C}_{i-1}$ so that (P1)--(P3) are satisfied for $\mathcal{C}_{i-1}$.

Let 
\[
J_i=\bigcup \{I\in\mathcal{C}_{i-1}; v_i\in I\} \ \text{ and }\ J'_i=\bigcap \{S^1\setminus I; I\in\mathcal{C}_{i-1} \wedge v_i\in I\}.
\]
That is, $J_i$ is the union of all intervals from $\mathcal{C}_{i-1}$ that contain $v_i$, and $J'_i$ is the complement~$S^1\setminus J_i$. We claim that $J'_i$ is an interval of positive length. Indeed, by property (P3) and by the $1$-dimensional Helly theorem applied to the intervals $S^1\setminus I$ where $I\in \mathcal{C}_{i-1}$ and $v_i\in I$, these intervals intersect at a common point. In fact, by properties (P1) and (P2), the endpoints of these intervals are pairwise disjoint, and so their common intersection $J'_i$ is a nontrivial interval.

Let $v'_i$ be an arbitary point from the interior of $J'_i$ that is distinct from all the points $v_1,v_2,\dots,\allowbreak v_{2n},v'_1,v'_2,\dots,v'_{i-1}$, and let $\mathcal{C}_i=\mathcal{C}_{i-1} \cup \{[v_i,v'_i),[v'_i,v_i)\}$. Clearly, $\mathcal{C}_i$ satisfies properties (P1) and (P2) by construction. Now we verify property (P3). Suppose that one of the intervals $[v_i,v'_i),[v'_i,v_i)$, together with some interval $I\in\mathcal{C}_{i-1}$, cover the whole circle. If $v_i\in I$, then $I\subseteq J_i$ and none of the intervals $[v_i,v'_i),[v'_i,v_i)$ covers the complement $J'_i$ by the choice of $v'_i$. If $v_i\notin I$, then some neighborhood $N(v_i)$ of $v_i$ is disjoint with $I$, but none of the intervals $[v_i,v'_i),[v'_i,v_i)$ contains $N(v_i)$. Therefore, property (P3) is satisfied also for~$\mathcal{C}_i$.

It remains to define the desired homeomorphism of the circle. Let $w_1,w_2,\dots,\allowbreak w_{4n}$ be a relabeling of the points $v_1,v_2,\dots,v_{2n},v'_1,v'_2,\dots,v'_{2n}$ in the counter-clockwise cyclic order around the circle. By property (P3), every pair of points $\{v_i,v'_i\}$ is relabeled as a pair $\{w_j,w_{j+2n}\}$ for some $j$.
We will map the $4n$ points $w_j$ to the vertices of a regular $4n$-gon, keeping their circular ordering.

Define the homeomorphism $f:S^1 \rightarrow S^1$ as follows. For every $j\in\{1,2,\dots,\allowbreak 4n\}$, let 
$f(w_j)=e^{2\pi i j/(4n)}$, 
and interpolate $f$ in each of the intervals $[w_j,w_{j+1}]$ by an arbitrary homeomorphism with the interval $[f(w_j),f(w_{j+1})]$. By the previous observation, each pair~$\{v_i,v'_i\}$ is mapped by $f$ to a pair of antipodal points on the circle. Therefore, by property (P3), every 
arc from $\mathcal{C}$ is mapped by $f$ to an interval of length at most $\pi \cdot (1-1/(2n))$.

\subsection{General cylindrically monotone arrangements}
In this subsection, when we say that an arrangement is extendable, we mean that it is extendable in the family of all cylindrically monotone arrangements. 

Unlike normal arrangements, general arrangements of cylindrically monotone pseudosegments are not always extendable, as stated in Proposition~\ref{prop_obstruction}.
Moreover, Theorem~\ref{theorem_hard} states that in general, deciding whether a cylindrically monotone arrangement is $(a,b)$-extendable is NP-hard.

\subsubsection{Proof of Proposition~\ref{prop_obstruction}}
Figure~\ref{fig04_nonextendable} provides an example of an arrangement $\mathcal{A}$ of five pseudosegments and a pair of points $a$, $b$ such that every cylindrically monotone curve connecting $a$ and $b$ has to intersect some pseudosegment of $\mathcal{A}$ twice. This means that $\mathcal{A}$ is not $(a,b)$-extendable. This example is the smallest that we know of where the points~$a$ and $b$ do not lie on any of the curves of~$\mathcal{A}$.

\begin{figure}
\begin{center}	
\includegraphics[scale=1]{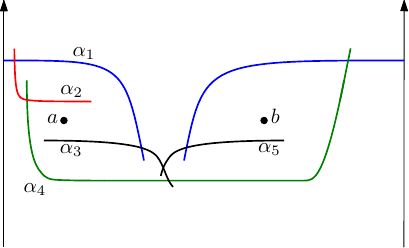}
\caption{An arrangement of five cylindrically monotone pseudosegments that is not $(a,b)$-extendable.} 
\label{fig04_nonextendable}
\end{center}		
\end{figure}

\subsubsection{Proof of Theorem~\ref{theorem_hard}}
 Our proof is heavily inspired by a recent proof by Arroyo et al.~\cite{AKPVSW23_inserting} showing that deciding the extendability of a simple drawing of a graph by one edge between a prescribed pair of points is NP-hard. We adapt their reduction to the more restricted setting of cylindrically monotone drawings.

We prove the NP-hardness by a reduction from 3-SAT. Let $\phi(x_1,\dots, x_n)$ be a
3-SAT-formula with variables $x_1,\dots, x_n$ and a set of clauses $C = \{C_1,\dots,C_m\}$. A \emph{literal} is an occurrence of a variable in a clause. 

We use the following lemma by Arroyo et al.~\cite{AKPVSW23_inserting}.
\begin{lem}[\cite{AKPVSW23_inserting}]
    The following transformation of a clause with only positive or only negative literals, respectively, preserves the satisfiability of the clause
	($y$ is a new variable and $\mathtt{false_p}$, $\mathtt{false_n}$ are constants with value false): 
\begin{align*}
		x_i \! \lor \! x_j \! \lor \! x_k 
        \ \ \longrightarrow & \ \ \ 
		\big( 
			x_k \! \lor \!  y \lor \mathtt{false_n}   
            \big) \wedge
            \big( 
			x_i \! \lor \!   x_j \! \lor \! \neg y   
		 \big)\\
		\neg x_i \! \lor \! \neg x_j \! \lor \! \neg x_k 
        \ \ \longrightarrow & \ \ \ 
		\big( 
			\neg x_i  \! \lor  \! \neg x_j  \! \lor  \! y \big) \wedge
			\big( 
            \neg x_k  \! \lor  \! \neg y  \! \lor  \! \mathtt{false_p} \big)
	\end{align*}
\end{lem}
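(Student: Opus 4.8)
The plan is to verify each of the four rewriting rules (i)--(iv) directly, by checking that the original clause and the pair of replacement clauses have the same set of satisfying assignments once we project away the new variable $y$. The key observation is that in each case, the new variable $y$ appears in exactly the two replacement clauses, once positively and once negatively, so the conjunction of the two clauses behaves, after quantifying out $y$, like a resolvent. Concretely, for a fixed assignment to the old variables, we ask: does there exist a value of $y$ making both replacement clauses true, and is this equivalent to the old clause being true?

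First I would treat case (i)--(ii). The two replacement clauses are $(x_k \lor y \lor \mathtt{false_n})$ and $(x_i \lor x_j \lor \neg y)$; since $\mathtt{false_n}$ is the constant false, the first is just $x_k \lor y$. If $x_i \lor x_j \lor x_k$ is true, I would pick $y$ to satisfy both: if $x_k$ is true, any $y$ works for the first clause and I set $y$ false to satisfy the second; if $x_k$ is false then $x_i \lor x_j$ is true, so the second clause holds regardless of $y$, and I set $y$ true for the first. Conversely, if $x_i \lor x_j \lor x_k$ is false, then $x_k$ is false so the first clause forces $y$ true, but then the second clause $x_i \lor x_j \lor \neg y$ is false; hence no choice of $y$ satisfies both. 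This shows the transformation preserves satisfiability. Cases (iii)--(iv) are the De~Morgan dual: replacing each variable by its negation and swapping the roles of $\mathtt{false_p}$ and $\mathtt{false_n}$, the same argument applies verbatim, so I would just remark on the symmetry rather than repeat the computation.

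The only mild subtlety — and the step I would state most carefully — is the precise meaning of ``preserves the satisfiability of the clause'' in the context of the whole reduction: the new variable $y$ is fresh (it occurs in no other clause), so adding it and the two new clauses to the formula yields an equisatisfiable instance, and moreover any satisfying assignment of the new formula restricts to one of the old and vice versa. I would make this explicit by phrasing the conclusion as: the projection onto the old variables of the solution set of the two new clauses equals the solution set of the original clause. Everything else is a finite case check over the at most eight assignments of $x_i,x_j,x_k$, which I would not write out in full. I do not anticipate a genuine obstacle here; this is a lemma whose proof is a short truth-table verification, and the main care is bookkeeping the constants $\mathtt{false_p}$, $\mathtt{false_n}$ correctly and stating the equivalence at the level of assignments rather than merely ``satisfiable or not''.
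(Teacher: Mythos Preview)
Your argument is correct: the direct truth-table check with the fresh variable $y$ works exactly as you describe, and your care in phrasing the conclusion as an equality of projections onto the old variables is appropriate. Note, however, that the paper does not prove this lemma at all---it is quoted verbatim from \cite{AKPVSW23_inserting} and used as a black box---so there is no ``paper's own proof'' to compare against; your verification is the standard one and would be acceptable as a self-contained justification.
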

We use the transformation from this lemma on every clause containing only positive or only negative literals. We obtain a new 3-SAT-formula with the same satisfiability and a modified set of clauses and variables (and new constants $\mathtt{false_p}$, $\mathtt{false_n}$). For simplicity, we abuse the notation and use the same notation for the new formulas, variables, and clauses. Furthermore, we consider each occurrence of $\mathtt{false_p}$ to be a positive literal and each occurrence of $\mathtt{false_n}$ to be a negative literal. The new formula $\phi(x_1,\dots, x_n)$ has exactly three literals in each clause and no clause has only positive or only negative literals. 

We construct an arrangement $\mathcal{A}$ of cylindrically monotone pseudosegments and a pair of points $a,b$, 
such that $\mathcal{A}$ is $(a,b)$-extendable if on only if $\phi(x_1,\dots, x_n)$ is satisfiable. Since 3-SAT is NP-hard the theorem will follow.

\begin{figure}
\begin{center}	
\includegraphics[scale=1]{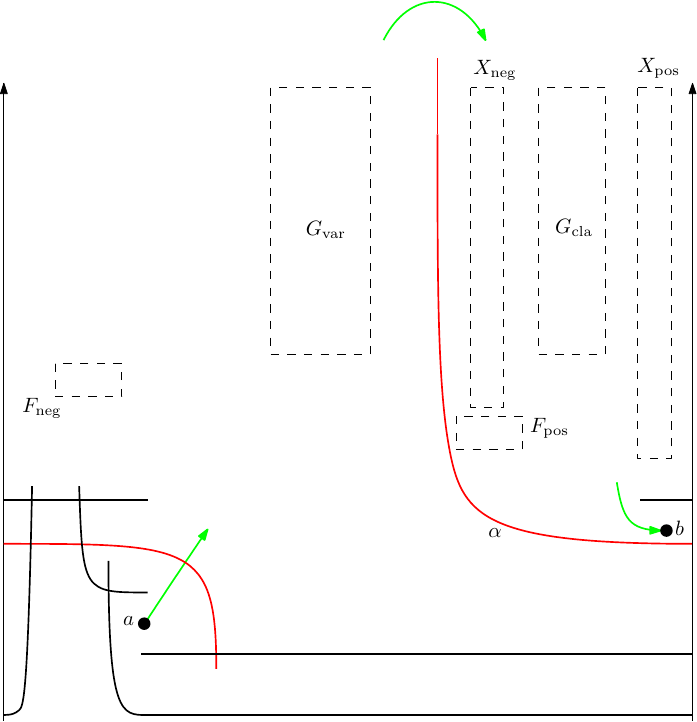}
\caption{An arrangement $\mathcal{A}_1$ of cylindrically monotone pseudosegments (depicted by solid black and red lines). Any monotone curve joining $a$ and $b$ and extending $A_1$ would have to be drawn in a way suggested by the (green) arrows. Regions~$F_{\mathrm{neg}}, F_{\mathrm{pos}}, G_{\mathrm{var}}, G_{\mathrm{cla}}$ illustrate the approximate positioning of gadgets from the proof: negative constant gadgets, positive constant gadgets, variable gadgets, and clause gadgets, respectively. Regions $X_{\mathrm{neg}}$ and $X_{\mathrm{pos}}$ illustrate the only areas where pseudosegments corresponding to the negative and positive literals cross, respectively.} 
\label{fig05_NP-start}
\end{center}		
\end{figure}

We start with an arrangement $\mathcal{A}_1$ of cylindrically monotone pseudosegments depicted in Figure~\ref{fig05_NP-start}. Assume that $\gamma$ is a cylindrically monotone curve starting in $a$ and ending in $b$ that intersects every pseudosegment of $\mathcal{A}$ at most once. No such $\gamma$ can be left-oriented because it would cross some pseudosegment of $\mathcal{A}_1$ twice. 
Hence, $\gamma$ must be right-oriented. Furthermore, it cannot cross the ``right part'' of the pseudosegment $\alpha$ (the right component of $\alpha$ in the figure) since it is forced to cross the ``left part'' of $\alpha$, and so it has to go above the right part of $\alpha$ and follow the marked direction (green arrows) in Figure~\ref{fig05_NP-start}.

For every occurrence of a literal (even constant one), we will add a new pseudosegment into $\mathcal{A}_1$ starting in variable or constant gadgets and ending in clause gadgets (we will define these gadgets later). Pseudosegments corresponding to positive literals will be left-oriented, and we call them \emph{positive pseudosegments}. Pseudosegments corresponding to negative literals will be right-oriented, and we call them \emph{negative pseudosegments}. All of these new pseudosegments will be horizontal everywhere except for the regions $X_{\text{neq}}$ and $X_{\text{pos}}$. 

\begin{figure}
\begin{center}	
\includegraphics[scale=1]{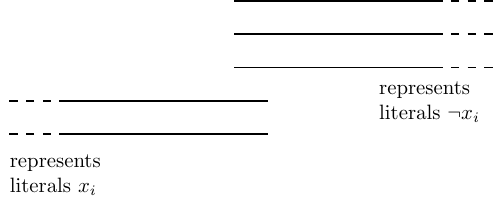}
\caption{The variable gadget of a variable $x_i$. Every pseudosegment corresponds to an occurrence of one literal. Pseudosegments corresponding to negative literals continue to the right. Pseudosegments corresponding to positive literals continue to the left.} 
\label{fig06_NP_var}
\end{center}		
\end{figure}

For each variable $x_i$ we construct the corresponding \emph{variable gadget}. It consists of a neighborhood of starting points of all pseudosegments corresponding to literals $x_i$ or $\neg x_i$, arranged so that the positive pseudosegments continue to the left and are below the negative pseudosegments, that continue to the right. Locally, the variable gadget looks as depicted in Figure~\ref{fig06_NP_var}.
Globally, we order the variable gadgets and put them into the region $G_\mathrm{var}$ depicted in Figure~\ref{fig05_NP-start}, so that the variable gadget corresponding to a variable $x_i$ is above and to the right of the variable gadgets corresponding to variables $x_1, \dots, x_{i-1}$.

For the constants $\mathtt{false_p}$ and $\mathtt{false_n}$ we construct two \emph{constant gadgets}. The constant gadget of each of the two constants consists of a neighborhood of starting points of all pseudosegments corresponding to the literals equal to this constant. The starting points of these pseudosegments are arranged on a vertical line and the pseudosegments continue to the left in the case of $\mathtt{false_p}$ or to the right in the case of $\mathtt{false_n}$. Globally, we put the constant gadget of $\mathtt{false_p}$ into the region $F_\mathrm{pos}$ as depicted in Figure~\ref{fig05_NP-start} and the constant gadget of $\mathtt{false_n}$ into $F_\mathrm{neg}$.

\begin{figure}[!htb]
\begin{center}	
\includegraphics[scale=1]{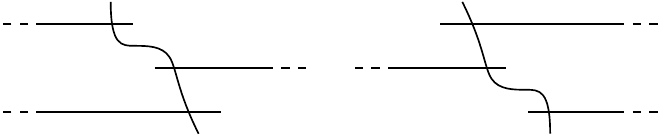}
\caption{Clause gadgets. Left: a gadget of a clause with one positive literal. Right: a gadget of a clause with two positive literals. Pseudosegments corresponding to negative literals continue to the left. Pseudosegments corresponding to positive literals continue to the right.} 
\label{fig07_NP_clause}
\end{center}		
\end{figure}

\begin{figure}
\begin{center}	
\includegraphics[scale=1]{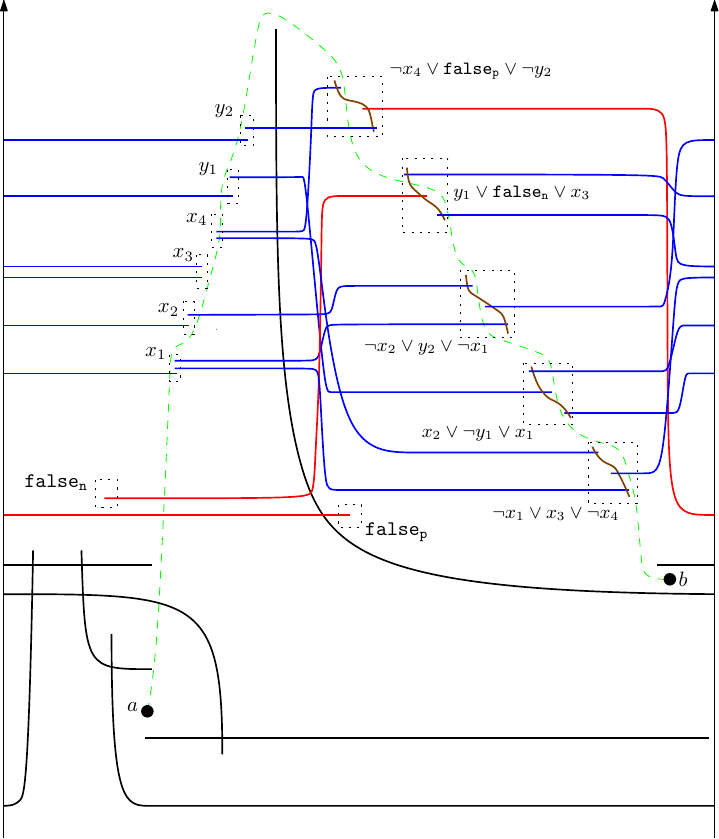}
\caption{Reduction of the satisfiability of the formula $(\neg x_1 \vee x_3 \vee \neg x_4) \wedge (x_2 \vee \neg y_1 \vee x_1) \wedge (\neg x_2 \vee y_2 \vee \neg x_1 ) \wedge (y_1 \vee \mathtt{false_n} \vee x_3) \wedge (\neg x_4 \vee \mathtt{false_p} \vee \neg y_2)$ to the problem of $(a,b)$-extendability of an arrangement of cylindrically monotone pseudosegments.\\
Pseudosegments corresponding to the constant literals are red, pseudosegments corresponding to remaining literals are blue, auxiliary pseudosegments of clause gadgets are brown, and pseudosegments of the starting arrangement $\mathcal{A}_1$ are black. A clause is satisfiable if and only if we can extend the arrangement by a new monotone pseudosegment connecting $a$ and $b$. In the figure, one such pseudosegment is drawn by the dashed green curve corresponding to the satisfying assignment $x_1, y_1, y_2 \longrightarrow  \mathtt{false}$ and $x_2, x_3, x_4\longrightarrow  \mathtt{true}$. Colors refer only to the electronic version.} 
\label{fig08_NP-end}
\end{center}		
\end{figure}

For each clause we construct a \emph{clause gadget}. It consists of a neighborhood of the three remaining endpoints of the pseudosegments corresponding to the literals in the clause together with one new auxiliary pseudosegment intersecting the three pseudosegments. There are two types of clause gadgets: one type for clauses with exactly one positive literal and the second type for clauses with exactly two positive literals. The positive pseudosegments continue to the right, and the negative pseudosegments continue to the left. In the case of a clause with one positive literal, the endpoint of the positive pseudosegment is below and to the right of the endpoint of one negative pseudosegment, and above and to the left of the endpoint of the other negative pseudosegment. The other case is analogous. Locally, the two types look as depicted in Figure~\ref{fig07_NP_clause}. 
Globally, we order the clause gadgets and put them into the region $G_\mathrm{cla}$ depicted in Figure~\ref{fig05_NP-start}, so that the clause gadget corresponding to a clause $C_i$ is below and to the right of the clause gadgets $C_1, \dots, C_{i-1}$.

It remains to connect the parts of pseudosegments from variable and constant gadgets with the parts from clause gadgets. We extend the pseudosegments from their endpoints horizontally until they reach region $X_{\text{neg}}$ or $X_{\text{pos}}$. Negative pseudosegments reach the region $X_{\text{neg}}$, positive pseudosegments reach $X_{\text{pos}}$. Then we join the corresponding parts inside $X_{\text{neg}}$ and $X_{\text{pos}}$ in such a way that each pair of pseudosegment cross at most once; see Figure~\ref{fig08_NP-end}.

Clearly, this construction is polynomial since we need only the combinatorial description of the drawing.  

We now show the correctness of the construction.

Assume that $\gamma$ is a cylindrically monotone curve joining $a$ and $b$ that intersects every pseudosegment of $\mathcal{A}$ at most once.
Split $\gamma$ into two parts at the point where it goes above $\alpha$ in between the regions $G_{\mathrm{var}}$ and $G_{\mathrm{cla}}$; denote by $\gamma_1$ the left part and by $\gamma_2$ the right part. By construction, the curve $\gamma_1$ has to intersect all pseudosegments corresponding to $\mathtt{false_p}$ and $\mathtt{false_n}$. Furthermore, by construction, for each $x_i$ the curve $\gamma_1$ has to intersect all pseudosegments corresponding to the positive literals of $x_i$ or all pseudosegments corresponding to the negative literals of $x_i$. In the former case assign $\mathtt{false}$ to $x_i$, in the latter assign $\mathtt{true}$. We claim that this is a satisfying assignment. Otherwise, some clause would not be satisfied. Consider the three pseudosegments corresponding to the literals of an unsatisfied clause (all of them are evaluated to $\mathtt{false}$). By our construction of the clause gadget, $\gamma_2$ has to cross one of these pseudosegments. But $\gamma_1$ already crossed this pseudosegment since it crossed all pseudosegments corresponding to literals evaluated to $\mathtt{false}$; a contradiction.

On the other hand, assume that there is a satisfying assignment of $\phi(x_1,\dots, x_n)$. If $x_i$ is evaluated to $\mathtt{false}$ we draw $\gamma_1$ to the left of the variable gadget corresponding to $x_i$, so that $\gamma_1$ does not intersect any pseudosegment corresponding to a negative literal $\neg x_i$. In the other case, we draw $\gamma_1$ to the right of the variable gadget corresponding to $x_i$, so that $\gamma_1$ does not intersect any pseudosegment corresponding to a positive literal $x_i$. This can clearly be done independently for all $x_i$. We draw $\gamma_2$ so that it intersects pseudosegments only in the clause gadgets. Since each clause $K$ is satisfied, there exists a pseudosegment $\rho$ corresponding to a literal of $K$ that is not intersected by $\gamma_1$. Then $\gamma_2$ can be drawn through the clause gadget of $K$ by intersecting only $\rho$ and (possibly) the auxiliary pseudosegment of the clause gadget. Hence, no pseudosegment that is crossed by $\gamma_2$ is crossed by $\gamma_1$ and so $\gamma$ is a valid $(a,b)$-extension.


\section*{Acknowledgements}
We thank the anonymous referees of earlier versions of the paper for helpful comments.


\end{document}